\newtheorem{theorem}{Theorem}[section]
\newtheorem{lemma}[theorem]{Lemma}
\newtheorem{proposition}[theorem]{Proposition}
\theoremstyle{definition}
\newtheorem{definition}[theorem]{Definition}
\newtheorem{example}[theorem]{Example}
\theoremstyle{remark}
\newtheorem{remark}[theorem]{Remark}
\numberwithin{equation}{section}
\title{Metrizability and Dynamics of Weil Bundles}
\author[1]{Stephane Tchuiaga \thanks{tchuiagas@gmail.com}}
\author[2]{Moussa Koivogui \thanks{moussa.koivogui@esatic.ci}}
\author[3]{Fidèle Balibuno \thanks{fidele.balibuno@unikin.ac}}
\affil[1]{Department of Mathematics, University of Buea, South West Region, Cameroon}
\affil[2]{Ecole Superieure Africaine des Technologies de l'Information et de Communication,
	C\^ote d’ Ivoire}
\affil[3]{Department of Mathematics and Computer Science, Faculty of Sciences and Technologies\\
	University of Kinshasa, Kinshasa, D.R.Congo}
\date{ }
\begin{document}

		\maketitle

	\begin{abstract} 
This paper bridges synthetic and classical differential geometry by investigating the metrizability and dynamics of Weil bundles. For a smooth, compact manifold \(M\) and a Weil algebra \(\mathbf{A}\), we prove that the manifold \(M^\mathbf{A}\) of \(\mathbf{A}\)-points admits a canonical, weighted metric \(\mathfrak{d}_w\) that encodes both base-manifold geometry and infinitesimal deformations. Our approach relies on constructions and  methods of local and global analysis. Key results include: (1). Metrization: \(\mathfrak{d}_w\) induces a complete metric topology on \(M^\mathbf{A}\). (2). Path Lifting: Curves lift from \(M\) to \(M^\mathbf{A}\) while preserving topological invariants.
(3). Dynamics: Fixed-point theorems for diffeomorphisms on \(M^\mathbf{A}\) connected to stability analysis.
(4). Topological Equivalence: \(H^*(M^\mathbf{A}) \cong H^*(M)\) and \(\pi_\ast(M^\mathbf{A}) \cong \pi_\ast(M)\).  
	\end{abstract}
\ \\
{\bf Keywords:} Smooth Manifolds, Weil Bundles, Infinitesimal Geometry, Metrizability, Spectral sequence,  Fixed points.\\
\textbf{2000 Mathematics subject classification: } 53Cyy, 58Axx.
\section{Introduction}
Weil bundles, introduced by André Weil, provide a powerful generalization of jet spaces and a rigorous foundation for synthetic differential geometry \cite{Wei, Mo}. Unlike traditional approaches relying on limits, Weil bundles enable a direct and axiomatic treatment of infinitesimal quantities, leading to a coordinate-free synthetic calculus on manifolds. This framework offers new perspectives on classical differential geometry problems and open doors to novel applications. For example, the ability of Weil bundles to encode higher-order geometric information makes them  valuable in geometric mechanics for modeling complex systems and in machine learning for constructing more informative latent spaces. However, the metrizability of \(M^\mathbf{A}\) is non-trivial, as it requires reconciling the discrete nature of nilpotent elements with the continuous structure of the underlying manifold. Furthermore, understanding dynamics on \(M^\mathbf{A}\) allows us to generalize classical stability analysis by considering not only the stability of points but also the stability of infinitesimal neighborhoods. This paper investigates the metrizability and dynamical properties of Weil bundles. Our investigation yields several results that advance the theory of Weil bundles. 
\begin{itemize}
	\item \text{Metrization:} For a smooth, compact manifold \(M\) and a Weil algebra \(\mathbf{A}\), we prove that the manifold \(M^\mathbf{A}\) of \(\mathbf{A}\)-points admits a canonical, weighted, complete metric \(\mathfrak{d}_w\) that encodes both base-manifold geometry and infinitesimal deformations (Lemma \ref{P-1},  Theorem \ref{Comp-1}).
	\item \text{Path Lifting:} We provide a constructive method for lifting curves from the base manifold \(M\) to the Weil bundle \(M^\mathbf{A}\), demonstrating that this lifting process preserves fundamental topological invariants such as connectedness and simple connectedness (Lemma \ref{L-1}).
		\item \text{Dynamics:} We characterize fixed points of diffeomorphisms on \(M^\mathbf{A}\) (Lemma \ref{L-1-2}, Lemma \ref{L-1-3}).
	\item \text{Topological Equivalence:} Analyzing the  Leray spectral sequence, we establish a strong topological equivalence between \(M\) and \(M^\mathbf{A}\), proving that \(H^*(M^\mathbf{A}) \cong H^*(M)\) and \(\pi_\ast(M^\mathbf{A}) \cong \pi_\ast(M)\) (Theorems \ref{Top-1} and \ref{Top-2}). This highlights that Weil bundles, while encoding infinitesimal information, do not alter the essential global topology.  Theorem \ref{Dyn-2} which allows us to transfer a wide range of properties from  $ Diff^\infty(M)$ to $ Diff^\infty(M^\mathbf A)$. 

\end{itemize}

\section{Preliminaries}
\subsection{Weil Algebras and Infinitesimal Structures}
Synthetic differential geometry  provides an axiomatic framework for infinitesimals by enriching smooth manifolds with nilpotent elements. Central to this framework are \emph{Weil algebras}, which formalize higher-order infinitesimal neighborhoods. 

\begin{definition}\cite{Ok}\label{def-weil-algebra}
A Weil algebra \(\mathbf{A}\) is a finite-dimensional, commutative, associative, unital \(\mathbb{R}\)-algebra of the form \(\mathbf{A} = \mathbb{R} \oplus \mathcal{A}\), where \(\mathcal{A}\) is a maximal ideal satisfying \(\mathcal{A}^{k+1} = 0\) for some \(k \geq 1\).
\end{definition}

\begin{example}\label{ex-weil-algebra}
	\begin{itemize}
		\item The algebra \(\mathbf{A}_1 = \mathbb{R}[\epsilon]/(\epsilon^2)\) models first-order infinitesimals. Its elements are dual numbers \(a + b\epsilon\), where \(\epsilon^2 = 0\). This corresponds to the tangent space \(T_xM\) at a point \(x \in M\).
		\item  The algebra \(\mathbf{A}_k = \mathbb{R}[\epsilon]/(\epsilon^{k+1})\) encodes \(k\)-th order jets, with \(\epsilon^{k+1} = 0\). 
	\end{itemize}
\end{example}

Weil algebras act as algebraic proxies for infinitesimal neighborhoods, enabling coordinate-free calculus. Their maximal ideal \(\mathcal{A}\) (nilpotent) represents "infinitesimally small" quantities, while the quotient \(\mathbf{A}/\mathcal{A} \cong \mathbb{R}\) recovers the base point.

\subsection{Weil Bundles}

\subsubsection{Infinitely near points} Let $M$ be a smooth manifold of dimension $m$, and let $I(x) \subset C^\infty(M)$ denote the ideal of smooth functions on $M$ that vanish at a point $x \in M$. For each natural number $k$, we denote by $\mathcal{I}^k(x)$ the ideal of smooth functions vanishing at $x$, along with all their derivatives up to order $k$. These ideals are related to jet spaces, as demonstrated by the isomorphism
$$J^k_x(M, \mathbb R) := C^\infty(M)/\mathcal I^k(x) \approx  \mathbb R[[X_1, \dots, X_n]]/(X_1,\cdots, X_n)^{k+1} =: W_n^k,$$
where $J^k_x(M, \mathbb R)$ is the space of $k-$jets of functions from $M$ to $\mathbb R$ at $x$, and $W_n^k$ is a truncated polynomial algebra. This isomorphism implies that for any given formal power series $S$, there exists a smooth function $f$ whose Taylor expansion near $x$ is precisely $S$. 
\begin{definition}\label{def-2}
	Let $M$ be a smooth manifold and $\mathbf{A}$ a Weil algebra. An \textbf{infinitely near point to} $x \in M$ \textbf{of kind A} is a \textbf{smooth} morphism of $\mathbb{R}$-algebras $\phi : C^\infty(M) \rightarrow \mathbf{A} = \mathbb{R} \oplus \mathcal{A}$ such that the following diagram commutes:
	\[
	\begin{tikzcd}
		&   \mathbb R \oplus \mathcal{A} \arrow{dr}{pr_{\mathbb R}} \\
		C^\infty(M)  \arrow{ur}{\phi} \arrow{rr}{ev_x} && \mathbb R,
	\end{tikzcd}
	\]
	where $ev_x$ is the evaluation map, defined by $ev_x(f) := f(x)$, and $pr_{\mathbb R}$ is the projection onto the real part of $\mathbf A$.
\end{definition}
This definition captures the idea of studying functions on $M$ in an infinitesimal neighborhood of the point $x$. The morphism \(\phi\) maps functions on \(M\) to elements of the Weil algebra \(\mathbf{A}\), effectively probing the infinitesimal structure around \(x\). Let $M^\mathbf A _x$ denote the set of all infinitely near points to $ x\in M$ of kind $\mathbf A$. Each element $\zeta\in M^\mathbf A _x$ can be expressed as 
\begin{equation}\label{Decom}
\zeta(f) =  ev_x(f)+ L_\zeta(f),
\end{equation}
 where $L_\zeta:C^\infty(M)\rightarrow  \mathcal{A}$ is a linear map satisfying  the Leibniz rule:
\begin{equation}\label{Eq-0}
	L_\zeta(fg + \lambda h) = L_\zeta(f) g(x) + f(x)L_\zeta(g) + L_\zeta(f)L_\zeta(g) + \lambda L_\zeta(h),
\end{equation}
for all $f, g, h \in C^\infty(M)$ and $\lambda\in \mathbb R$ (see Morimoto \cite{Mo}). This Leibniz rule reflects the algebraic structure of the Weil algebra and the way infinitely near points interact with function multiplication.

\subsubsection{Weil bundles as manifolds}

Given a manifold \(M\) and a Weil algebra \(\mathbf{A}\), we can construct the Weil bundle \(M^\mathbf{A}\), which consists of all infinitely near points to \(M\) of kind \(\mathbf{A}\). There exists a natural projection map \(\pi_{\mathbf A }: M^\mathbf A \rightarrow M\), mapping each infinitely near point to its base point. The triple  \(( M^\mathbf A,\pi_{\mathbf A } , M)\) equipped with the bundle topology, is known as the bundle of \(\mathbf A\)-points near to points in \(M\). The following proposition provides a way to construct Weil bundles.

\begin{proposition}\label{prop-1}\cite{Mo}
	Let $M$ be a smooth manifold and $\mathbf A$ a Weil algebra. The collection $M^\mathbf A$ of all infinitely near points of $M$ is a smooth manifold. If $V$ is a vector space, then $V^\mathbf A\approx V\oplus_{\mathbb R} \mathcal A$. Furthermore, we have $(M\times N)^\mathbf A\approx M^\mathbf A\times N^\mathbf A$.
\end{proposition}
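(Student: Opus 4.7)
My plan is to prove the three assertions in sequence: first the vector-space case, which provides the local model; then the smooth manifold structure on $M^\mathbf{A}$ assembled from prolonged charts; and finally the product formula by dualising on the function-algebra side. For $V=\mathbb{R}^m$ with basepoint $v_0$, I would use \eqref{Eq-0} inductively together with the nilpotency $\mathcal{A}^{k+1}=0$ to show that $\zeta(f)$ depends only on the $k$-jet of $f$ at $v_0$: any $f$ vanishing to order $k+1$ is a finite sum of $(k+1)$-fold products of functions vanishing at $v_0$, hence maps into $\mathcal{A}^{k+1}=0$. Taylor expansion at $v_0$ then reduces $\zeta(f)$ to a polynomial expression in the partial derivatives $\partial^\alpha f(v_0)$ and the nilpotent data $L_\zeta(x^i)\in\mathcal{A}$; conversely, any prescription of the $L_\zeta(x^i)$ extends uniquely to such a morphism. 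This yields the canonical identification $V^\mathbf{A}\cong V\otimes_\mathbb{R}\mathbf{A}$, giving the second statement of the proposition.

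For a general manifold $M$, I would fix a smooth atlas $\{(U_\alpha,\varphi_\alpha)\}$ and define prolonged charts $(U_\alpha^\mathbf{A},\varphi_\alpha^\mathbf{A})$, where $U_\alpha^\mathbf{A}:=\pi_\mathbf{A}^{-1}(U_\alpha)$ and $\varphi_\alpha^\mathbf{A}$ is induced by the vector-space identification applied to the open image $\varphi_\alpha(U_\alpha)\subset\mathbb{R}^m$. The transition functions $\varphi_\beta^\mathbf{A}\circ(\varphi_\alpha^\mathbf{A})^{-1}$ are then the Weil prolongations of the classical transition diffeomorphisms, and the polynomial formula from Step~1 shows that they act as smooth diffeomorphisms between open subsets of $\mathbb{R}^m\otimes\mathbf{A}$. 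This produces a smooth atlas of dimension $m\cdot\dim_\mathbb{R}\mathbf{A}$ on $M^\mathbf{A}$.

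For the product formula, I would construct the canonical map $\Phi:(M\times N)^\mathbf{A}\to M^\mathbf{A}\times N^\mathbf{A}$ by pullback along the projections, $\Phi(\zeta):=(\zeta\circ\mathrm{pr}_M^{\ast},\,\zeta\circ\mathrm{pr}_N^{\ast})$, and verify in product charts that it corresponds to the tautological isomorphism $(\mathbb{R}^m\oplus\mathbb{R}^n)\otimes\mathbf{A}\cong(\mathbb{R}^m\otimes\mathbf{A})\oplus(\mathbb{R}^n\otimes\mathbf{A})$ supplied by Step~1. Both smoothness and bijectivity of $\Phi$ follow at once; the inverse reassembles a morphism on $C^\infty(M\times N)$ by separation of variables, using that separately defined morphisms agree on finite Taylor expansions and that the nilpotency of $\mathcal{A}$ controls the remainder.

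I expect the principal obstacle to be the localization step underlying both Step~2 and Step~3: one must show that every smooth algebra morphism $\phi:C^\infty(M)\to\mathbf{A}$ annihilates any function vanishing on an open neighborhood of its basepoint $\pi_\mathbf{A}(\phi)$, so that the global morphism is faithfully captured by its restriction to a single chart. My approach is to pick a bump function $\chi\equiv 1$ near the basepoint with support inside a chart, decompose $f=\chi f+(1-\chi)f$, and iterate the Leibniz rule $k+1$ times together with $\mathcal{A}^{k+1}=0$ to kill the appropriate summand. Once this locality is secured, the remaining verifications reduce to the polynomial calculus established in Step~1.
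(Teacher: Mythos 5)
Your proposal is correct, and it reconstructs precisely the classical Weil--Morimoto argument; note that the paper itself offers no proof of this proposition, deferring entirely to the citation \cite{Mo}, so there is no internal proof to diverge from. You have correctly identified the crux: the localization statement that a smooth algebra morphism $\phi:C^\infty(M)\to\mathbf{A}$ kills any function vanishing near $\pi_{\mathbf{A}}(\phi)$ (via the bump-function factorization $f=(1-\chi)^{k+1}f$ and $\mathcal{A}^{k+1}=0$) is exactly the ``Morimoto vanishing lemma'' that the paper invokes without proof in its dynamics section, and your reduction of $\zeta(f)$ to the $k$-jet of $f$ via $I(v_0)^{k+1}\subseteq\ker\zeta$ is the standard route to the local model. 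One small discrepancy worth flagging: you state the vector-space case as $V^{\mathbf{A}}\cong V\otimes_{\mathbb{R}}\mathbf{A}$, whereas the proposition as printed reads $V\oplus_{\mathbb{R}}\mathcal{A}$; your version is the dimensionally correct one ($\dim V\cdot\dim\mathbf{A}$, matching the chart description $\phi^{\mathbf{A}}:\pi_{\mathbf{A}}^{-1}(U)\to\mathbf{A}^{m}$ given later in the paper), and the paper's notation should be read as $V\oplus(V\otimes_{\mathbb{R}}\mathcal{A})$. The only place you should add a line of detail is the surjectivity of the inverse construction in the product formula, i.e.\ the verification that the formula $\zeta(h):=\sum_{|\alpha|\le k}\frac{1}{\alpha!}\partial^{\alpha}h(x_0,y_0)\,\epsilon^{\alpha}$ assembled from the nilpotent increments of $\xi$ and $\eta$ is multiplicative; this follows from the Leibniz formula for derivatives of products, but ``separation of variables'' as stated is a little too terse to count as a verification.
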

\subsection{Local coordinates on \(M^\mathbf{A}\)}

The goal of this section is to explicitly define local charts for \(M^\mathbf{A}\), as these are essential for this work. 
\subsubsection{\(M^\mathbf{A}\) as an \(\mathbf{A}\)-manifold}

Assume that \(\dim M = m\) and \(\dim \mathbf A = l\).  Then, the \(\mathbf{A}\)-dimension of \(M^\mathbf A\) is \(m\).  Let \(x \in M\), and let \((U, \phi := (\phi_1,\dotsm,\phi_m))\) be a local chart on \(M\) around \(x\).  Also, let \(\alpha_1,\dots,\alpha_l\) be a fixed basis of the Weil algebra \(\mathbf A\).  Then, we can define a map
$
\phi^\mathbf A:  \pi_{\mathbf A }^{-1}(U)\rightarrow  \mathbf A^m, \quad \varepsilon\mapsto (\varepsilon(\phi_1),\dots,\varepsilon(\phi_m) ).
$
This map \(\phi^\mathbf A\) is a diffeomorphism, which shows that \((\pi_{\mathbf A }^{-1}(U), \phi^\mathbf A )\) is a local chart on \(M^\mathbf A\) around \(\varepsilon\). Thus, \(M^\mathbf A\) is an \(\mathbf{A}\)-manifold of dimension \(m\).

\subsubsection{\(M^\mathbf{A}\) as a Real-manifold }

 Let \(\epsilon\in M^\mathbf A\), so \(\epsilon\in M^\mathbf A_x\) for some \(x\in M\). Let $x\in M$, and choose a chart \((U, \phi_1, \dots, \phi_m)\) on \(M\) around \(x\). We can express \(\epsilon(\phi_i)\) with respect to the basis \(\{\alpha_1, \dots,\alpha_{l}\}\) as
$
\epsilon(\phi_i) = \sum_{k = 1}^{l} x_{ik}(\epsilon)\alpha_k,
$
for each \(i = 1,\dots, m\), where the \(x_{ik}(\epsilon)\) are real numbers. Thus, we can construct a local chart on \(M^\mathbf A\) as
$
(\pi^{-1}(U), x_{1,1},\dots x_{m,1}, x_{1,2},\dots x_{m,2}, \dots,  x_{1,l},\dots x_{m,l}  ).
$ Without loss of generality, we can assume that \(\alpha_1 = 1_\mathbf A\) is the multiplicative identity in \(\mathbf A\). At first glance, it might appear that this chart depends on the choice of \(\epsilon\). However, we can show that this is not the case. For any \(\epsilon_u\) with \(u = 1, 2\), we have
\begin{equation}\label{E1}
	x_{ij}(\epsilon_1) = x_{ij}(\epsilon_2) + \alpha_j^\ast(t_1^i -t_2^i),
\end{equation}
where \(\epsilon_u(\phi_i) = \phi_i(x) + t_u^i\) for \(u =1, 2\), and  $\alpha_j^\ast:  \mathbf A \longrightarrow \mathbb R,$ is the dual element of $\alpha_j$.
On the other hand, we can write
$
\epsilon_u(\phi_i) = \phi_i(x) + t_u^i = x_{i1}(\epsilon_u) + \sum_{k = 2}^{\dim(A)} x_{ik}(\epsilon)\alpha_k,
$
which implies
\begin{equation}\label{E2}
	x_{i1}(\epsilon_u)  = \phi_i(x),
\end{equation}
for all \(i = 1,\dots, m\). Combining (\ref{E1}) and (\ref{E2}) yields
$
	t_1^i  = t_2^i,
$
for all \(i = 1,\dots, m\). Therefore, the chart
$
(\pi^{-1}(U), x_{1,1},\dots x_{m,1}, x_{1,2},\dots x_{m,2}, \dots,  x_{1,l},\dots x_{m,l}  )
$
depends only on \(x\), and not on the specific choice of \(\epsilon\in \pi^{-1}(U)\).

\subsubsection{Example: Coordinate transitions on \((\mathbb{R}^2)^{\mathbb{R}[\epsilon]/(\epsilon^3)}\)}
To illustrate coordinate transitions, consider the case where \(M = \mathbb{R}^2\) and \(\mathbf{A} = \mathbb{R}[\epsilon]/(\epsilon^3)\), where \(\epsilon^3 = 0\). This Weil algebra encodes second-order infinitesimal information.

\subsubsection*{\(\mathbf{A}\)-Coordinates}

Let \((U, \phi = (x, y))\) and \((V, \psi = (u, v))\) be two overlapping charts on \(\mathbb{R}^2\), where \(x, y\) are the standard coordinates and \(u = x^2\), \(v = y + x\). Then, on the overlap \(U \cap V\), we have the transition map:
$
\psi \circ \phi^{-1}(x, y) = (x^2, y + x).
$ Now, consider the Weil bundle \((\mathbb{R}^2)^{\mathbb{R}[\epsilon]/(\epsilon^3)}\), pick \(\xi \in (\mathbb{R}^2)^{\mathbb{R}[\epsilon]/(\epsilon^3)}\). With respect to the chart \((U, \phi)\), we have \(\mathbf{A}\)-valued coordinates:
$
(\xi(x), \xi(y)) = (x_0 + x_1\epsilon + x_2\epsilon^2, y_0 + y_1\epsilon + y_2\epsilon^2),
$ where \(x_0, x_1, x_2, y_0, y_1, y_2 \in \mathbb{R}\). Similarly, with respect to \((V, \psi)\), we have:\\
$
(\xi(u), \xi(v)) = (u_0 + u_1\epsilon + u_2\epsilon^2, v_0 + v_1\epsilon + v_2\epsilon^2).
$ The transition map in \(\mathbf{A}\)-valued coordinates is given by:
$
(\xi(u), \xi(v)) = (\xi(x^2), \xi(y + x)).
$ Expanding this, we get:

\[
\xi(u) = (x_0 + x_1\epsilon + x_2\epsilon^2)^2 = x_0^2 + 2x_0x_1\epsilon + (2x_0x_2 + x_1^2)\epsilon^2,
\]
\[
\xi(v) = (y_0 + y_1\epsilon + y_2\epsilon^2) + (x_0 + x_1\epsilon + x_2\epsilon^2) = (y_0 + x_0) + (y_1 + x_1)\epsilon + (y_2 + x_2)\epsilon^2.
\]

Thus, the coordinate transition is:

\[
(u_0, u_1, u_2, v_0, v_1, v_2) = (x_0^2, 2x_0x_1, 2x_0x_2 + x_1^2, y_0 + x_0, y_1 + x_1, y_2 + x_2).
\]

\subsubsection*{Real-Coordinates}
In real-valued coordinates, \(\xi\) is represented as \((x_0, x_1, x_2, y_0, y_1, y_2)\) in the chart \((U, \phi)\) and as\\ \((u_0, u_1, u_2, v_0, v_1, v_2)\) in the chart \((V, \psi)\). The transition between these real-valued coordinates is given by the same expressions we derived above:
\[
u_0 = x_0^2, \quad u_1 = 2x_0x_1, \quad u_2 = 2x_0x_2 + x_1^2,
\]
\[
v_0 = y_0 + x_0, \quad v_1 = y_1 + x_1, \quad v_2 = y_2 + x_2.
\]

This example illustrates how the coordinate transition in the Weil bundle is determined by the transition map on the base manifold, along with the algebraic structure of the Weil algebra. \\

The following proposition describes the relationship between the boundary of the base manifold \(M\) and the boundary of the Weil bundle \(M^\mathbf{A}\).

\begin{proposition}\label{P-0}
	Let \(M\) be a smooth manifold with boundary and \(\mathbf A\) a Weil algebra. Then, regarding \(M^\mathbf A\) as a real manifold, we have \(\partial( M^\mathbf A) \neq\emptyset\), and
	$
	\bigcup_{x\in \partial M} M^\mathbf A_x \subseteq \partial( M^\mathbf A).$	In other words, if you are in the fibers over points of \(\partial M\)  then you lie on the boundary too.

\end{proposition}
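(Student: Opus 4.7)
The plan is to promote the real-coordinate chart constructed in the preceding subsection into a boundary chart for $M^\mathbf{A}$ and read off the boundary structure directly. Fix $x\in\partial M$ (implicitly assumed nonempty, since otherwise the first conclusion fails) and choose a boundary chart $(U,\phi=(\phi_1,\dots,\phi_m))$ on $M$ with $\phi(U)$ an open subset of the half-space $\{(t_1,\dots,t_m):t_1\ge 0\}$ and $\phi_1(x)=0$. Fix a basis $\alpha_1=1_\mathbf{A},\alpha_2,\dots,\alpha_l$ of $\mathbf{A}$ with $\alpha_2,\dots,\alpha_l\in\mathcal{A}$.

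First I would revisit the real-coordinate chart $(\pi_\mathbf{A}^{-1}(U),(x_{i,k}))$ in this setting. Equation (\ref{E2}) gives $x_{i,1}(\epsilon)=\phi_i(\pi_\mathbf{A}(\epsilon))$, so the $x_{i,1}$-coordinates are pulled back from $\phi\circ\pi_\mathbf{A}$ and therefore inherit the half-space constraint $x_{1,1}\ge 0$. The remaining coordinates $x_{i,k}$ with $k\ge 2$ record the coefficients of $\epsilon(\phi_i)-\phi_i(\pi_\mathbf{A}(\epsilon))\in\mathcal{A}$ along the nilpotent basis vectors and are free to take any real values. To justify this freedom I would invoke the jet-space identification $C^\infty(M)/\mathcal{I}^k(y)\cong W_m^k$ from Section 2: given any $y\in U$ and any prescription $\phi_i\mapsto\phi_i(y)+\sum_{k\ge 2}x_{i,k}\alpha_k\in\mathbf{A}$, there is a unique $\mathbb{R}$-algebra morphism $C^\infty(M)\to\mathbf{A}$ fitting in the commutative diagram of Definition \ref{def-2}, so $\epsilon\in M^\mathbf{A}_y$ with the prescribed coordinate values exists.

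Consequently the chart identifies $\pi_\mathbf{A}^{-1}(U)$ with an open subset of $\phi(U)\times\mathbb{R}^{m(l-1)}$, which, after reordering coordinates, is an open subset of the half-space $\{x_{1,1}\ge 0\}\subset\mathbb{R}^{ml}$. The boundary points of this local model are precisely those with $x_{1,1}=0$, i.e.\ those $\epsilon$ satisfying $\phi_1(\pi_\mathbf{A}(\epsilon))=0$, equivalently $\pi_\mathbf{A}(\epsilon)\in\partial M\cap U$. Hence every $\epsilon$ lying in a fiber over $\partial M\cap U$ belongs to $\partial(M^\mathbf{A})$. Since the boundary of a smooth manifold is a local notion, piecing together such boundary charts along a cover of $\partial M$ yields the inclusion $\bigcup_{x\in\partial M}M^\mathbf{A}_x\subseteq\partial(M^\mathbf{A})$; nonemptiness of the right-hand side is then immediate from the nonemptiness of $\partial M$ and of each fiber.

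The main obstacle is verifying that the real-coordinate construction described for interior charts in Section 2 goes through verbatim on boundary charts. Concretely, I must check that the freedom in the nilpotent coefficients $(x_{i,k})_{k\ge 2}$ survives when the coordinate functions $\phi_i$ are only defined on a half-space. This reduces to the algebraic fact that all derivatives at a boundary point $x\in\partial M$ of a smooth function on a half-space can be prescribed independently, either via the Whitney extension theorem or by using the standard convention that smoothness on a manifold with boundary means smoothness of a local extension across the boundary; once this is in place, the map $M^\mathbf{A}_x\to\mathcal{A}^m$, $\epsilon\mapsto\bigl(\epsilon(\phi_i)-\phi_i(x)\bigr)_{i=1}^m$, remains surjective, and the half-space argument above closes.
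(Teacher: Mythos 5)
Your proof follows essentially the same route as the paper's: both rely on the real-coordinate chart on $\pi_{\mathbf A}^{-1}(U)$ and the identity $x_{i,1}(\epsilon)=\phi_i(\pi_{\mathbf A}(\epsilon))$ to conclude that every point in a fiber over $\partial M$ satisfies $x_{1,1}=0$ and therefore lies in $\partial(M^{\mathbf A})$. You are in fact somewhat more careful than the paper, which asserts the conclusion directly from $x_{11}(\xi)=x_1(x)=0$ without spelling out that the chart image is an open subset of a half-space with the nilpotent coordinates ranging freely over $\mathcal A^{m}$ — the verifications you add are the right ones and do not change the underlying argument.
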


\begin{proof}
	Without loss of generality, we equip \(\mathbf A\) with a fixed basis \(1_{\mathbf A},\dots,\alpha_l\). Assume that \(\partial M \neq \emptyset\), and pick a point \(x \in \partial M\). Then, there exists a chart \((U, x_1,\dots, x_{\dim M})\) on \(M\) such that \(x_1(x) = 0\). Now, consider an infinitely near point \(\xi\in M^\mathbf A_x\). We can express \(\xi(x_i)\) as
	\[
	\xi(x_i) = \sum_{j = 1}^{\dim \mathbf A} x_{ij}(\xi)\alpha_j = x_i(x)1_{\mathbf A} + L_{\xi}(x_i),
	\]
	for each \(i = 1,\dots, \dim M\). Thus, \(x_{i1}(\xi) = x_i(x)\) for all \(i\), which implies that \(x_{11}(\xi) = x_1(x) = 0\). This shows that \(\partial M^{\mathbf A} \neq \emptyset\) (since the coordinate \(x_{11}\) vanishes), and that
	$
	\dim (\partial M^{\mathbf A}) = \dim M^{\mathbf A} - 1.
	$
	Furthermore, it follows that
	$
	\bigcup_{x\in \partial M} M^\mathbf A_x \subseteq \partial( M^\mathbf A).
	$
\end{proof}

\section{Topologies on \(M^\mathbf{A}\)}
To study the topological properties of \(M^\mathbf{A}\), we will introduce a metric that is compatible with its manifold structure. This will allow us to use tools from analysis to investigate properties such as completeness and connectedness. Assume that \(\alpha_1 = 1_\mathbf A, \alpha_2,\dots,\alpha_l\) is a basis of \(\mathbf A\) such that \(\alpha_2,\dots,\alpha_l\) forms a basis of the maximal ideal \(\mathcal A\). We equip the vector space \(\mathbf A\) with a norm \(\Vert \cdot \Vert_\mathbf A\).

\subsection{Weighted norms}

To define a suitable metric on \(M^\mathbf{A}\), we first introduce the concept of a weighted norm on the Weil algebra \(\mathbf{A}\). This allows us to control the relative importance of the infinitesimal directions in the Weil algebra. Let \(w = (w_1,\dots, w_l)\) be a vector of positive real numbers, representing the weights for the infinitesimal directions. This choice of weights provides a scaling when considering infinitesimals.   For an element \(a\in \mathbf A\), written as \(a = a_1\alpha_1 + \dots + a_l\alpha_l\), the weighted norm \(\Vert a \Vert_w\) is defined as:
\begin{equation}
	\Vert a \Vert_w = |a_1| + \left\Vert \sum_{i=2}^l a_iw_i\alpha_i \right\Vert_\mathbf A,
\end{equation}
where \(\Vert \cdot \Vert_\mathbf A\) is a norm on \(\mathbf A\). The weighted norm \(\Vert \cdot \Vert_w\) assigns a weight of $1$ to the real part of the Weil algebra and scales the infinitesimal part by the weights \(w\).  The weighted norm can be supported by the fact that in many geometric and physical applications, different infinitesimal directions may have different physical scales or significance. 

\subsection{Controlling the maps \(f \mapsto L_\xi(f)\)}

To establish the metrizability of \(M^\mathbf{A}\), it is crucial to show that the linear map \(f \mapsto L_\xi(f)\) is, in some sense, controlled by the function's size. Assume that \(M\) is compact.

\subsection*{Local coordinates}
Let $(U, \phi = (x^1, \dots, x^m))$ be a chart on $M$ centered at $x \in M$ (i.e., $\phi(x) = 0$). For $\xi \in M_x^\mathbf{A}$, the coordinates of $\xi$ are defined as:
$ \epsilon^i := \xi(x^i) \quad \text{for } i = 1, \dots, m. $
Since $x^i(x) = 0$, we have $L_\xi(x^i) = \xi(x^i) - x^i(x) = \epsilon^i \in \mathcal{A}$. 
For a smooth function $f \in C^\infty(M)$, we have:
\begin{equation}
	\label{eq:L_xi_f}
	L_\xi(f) = \sum_{1 \leq |\mathbf{i}| \leq k} \frac{1}{|\mathbf{i}|!} \partial^{\mathbf{i}} f(x) \epsilon^{\mathbf{i}},
\end{equation}
where: $\mathbf{i} = (i_1, \dots, i_r)$ is a multi-index with $|\mathbf{i}| = r$,  $\partial^{\mathbf{i}} f(x) = \frac{\partial^r f}{\partial x^{i_1} \dots \partial x^{i_r}}(x)$, and 
	 $\epsilon^{\mathbf{i}} = \epsilon^{i_1} \dots \epsilon^{i_r}$. Now, we consider two function spaces:
\begin{enumerate}
	\item The unit ball with uniform bound on derivatives:
	$$ \mathfrak{B}^1_k(M) = \left\{ f \in C^\infty(M) : \max_{|\mathbf{i}| \leq k} \sup_{x \in M} |\partial^{\mathbf{i}} f(x)| \leq 1 \right\},$$
		where \(k\) matches the order of \(\mathbf{A}\). 
	\item The standard unit ball with the uniform supremum norm:
	$$ \mathfrak{B}^1_0(M) = \{ f \in C^\infty(M, \mathbb{R}) : |f|_0 \leq 1 \}, $$
	where $|f|_0 := \sup_{x \in M} |f(x)|$.
\end{enumerate}
Let $\vartheta, \varepsilon \in M^\mathbf{A}$. We define two pseudo-metrics on $M^\mathbf{A}$:

\begin{enumerate}
	\item \textbf{Using the $C^k$-unit ball:}
	\begin{equation}\label{M-1}
		\mathfrak d_w(\vartheta, \varepsilon) = d_g(\pi_{\mathbf A }(\vartheta), \pi_{\mathbf A }( \varepsilon)) + \sup_{f\in \mathfrak {B}^1_k(M)}\Vert L_\vartheta(f) - L_\varepsilon(f)\Vert_w,
	\end{equation}
	\item \textbf{Using factorial decay weights:} Suppose  $w_{\mathbf{i}} \sim \frac{1}{|\mathbf{i}|!}$ , let us define factorial decay in the weights $\overline{w} = (w_1,\dots, w_l)$ such that $w_{|\mathbf{i}|} = 1/|\mathbf{i}|!$.
	\begin{equation}\label{M-2}
		\mathfrak d_{\overline{w}}(\vartheta, \varepsilon) = d_g(\pi_{\mathbf A }(\vartheta), \pi_{\mathbf A }( \varepsilon)) + \sup_{f\in \mathfrak {B}^1_0(M)}\Vert L_\vartheta(f) - L_\varepsilon(f)\Vert_{\overline{w}},
	\end{equation}
	
	where $d_g$ is the Riemannian distance on $M$.
\end{enumerate}

\begin{lemma}\label{P-1}
	Let \(M\) be a smooth compact manifold equipped with a Riemannian metric \(g\) and \(\mathbf A\) a Weil algebra. With the above notation, consider \(\mathfrak{d}_w\) and  \(\mathfrak{d}_{\overline w}\) defined in (\ref{M-1}) and (\ref{M-2}),
	then :
	\begin{enumerate}
		\item The function $\mathfrak d_w$ and   \(\mathfrak{d}_{\overline w}\) define metrics on $M^\mathbf A$.
		\item The topology induced by either function are equivalent.
		\item The topology induced by  each metric is equivalent to the standard manifold topology on $M^\mathbf A$.
	\end{enumerate}
\end{lemma}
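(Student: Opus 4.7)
The plan is to verify, in order: (i) the metric axioms for both $\mathfrak{d}_w$ and $\mathfrak{d}_{\overline w}$; (ii) that each metric topology agrees with the standard manifold topology on $M^{\mathbf A}$; and then to derive (iii), equivalence of the two metric topologies, by transitivity from (ii). Symmetry, non-negativity, and the triangle inequality will be inherited from $d_g$ and the weighted norms on $\mathbf A$. Finiteness of the suprema comes from the Taylor formula (\ref{eq:L_xi_f}), which represents $L_\xi(f)$ as a \emph{finite} sum (finite because $\mathcal A^{k+1}=0$) of derivative–coordinate products: on $\mathfrak B_k^1(M)$ the derivatives are bounded by $1$ and so the sum is dominated by the magnitudes of the $\epsilon^{\mathbf i}$, while on $\mathfrak B_0^1(M)$ the factorial decay in $\overline w$ is designed precisely to compensate the denominators $1/|\mathbf i|!$ in (\ref{eq:L_xi_f}).

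For the separation axiom I would first extract $\mathbb R$-linearity of $L_\xi$ in its argument from (\ref{Eq-0}): setting $f=1$ and noting that $L_\xi(1)=\xi(1)-1=0$ (since $\xi$ is a unital algebra morphism) collapses (\ref{Eq-0}) to $L_\xi(g+\lambda h)=L_\xi(g)+\lambda L_\xi(h)$. Consequently, if $\mathfrak{d}_w(\vartheta,\varepsilon)=0$, then $\pi_{\mathbf A}(\vartheta)=\pi_{\mathbf A}(\varepsilon)$ and $L_\vartheta=L_\varepsilon$ on $\mathfrak B_k^1(M)$; by compactness of $M$, every $h\in C^\infty(M)$ is a positive scalar multiple of some element of $\mathfrak B_k^1(M)$, so by linearity $L_\vartheta\equiv L_\varepsilon$ on all of $C^\infty(M)$, and the decomposition (\ref{Decom}) then forces $\vartheta=\varepsilon$. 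The argument for $\mathfrak d_{\overline w}$ is analogous, using $\mathfrak B_0^1(M)$ in place of $\mathfrak B_k^1(M)$.

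For part (3) I would work in a trivializing chart $(\pi_{\mathbf A}^{-1}(U),x_{11},\dots,x_{ml})$. In one direction, if $\vartheta_n\to\varepsilon$ in the manifold topology, all real coordinates converge, and (\ref{eq:L_xi_f}) yields $L_{\vartheta_n}(f)\to L_\varepsilon(f)$ uniformly over the relevant unit ball, since each of the finitely many summands depends continuously on the coordinates of $\vartheta_n$. Conversely, if $\mathfrak{d}_w(\vartheta_n,\varepsilon)\to 0$, then the base points converge in $M$; testing against bump-cut coordinate functions $\chi\phi_i$, rescaled to lie in $\mathfrak B_k^1(M)$ (with $\chi$ a bump equal to $1$ near $\pi_{\mathbf A}(\varepsilon)$), recovers the differences $L_{\vartheta_n}(\phi_i)-L_\varepsilon(\phi_i)$ for $n$ large by locality of the jet computation, and expansion in the basis $\alpha_2,\dots,\alpha_l$ yields convergence of each fiber coordinate $x_{ij}$. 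Part (2) is then immediate.

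The principal obstacle I foresee is the control of $\mathfrak{d}_{\overline w}$ on $\mathfrak B_0^1(M)$: without derivative bounds on $f$, the individual Taylor summands in (\ref{eq:L_xi_f}) are \emph{a priori} unbounded, and one must carefully exploit the matching $w_{|\mathbf i|}=1/|\mathbf i|!$ between the weights and the factorial denominators, together with the finiteness of the sum coming from $\mathcal A^{k+1}=0$. A secondary subtlety is verifying that the bump-function truncation of coordinate maps — necessary to produce globally defined test functions from local coordinate data — does not affect the recovery of the infinitesimal coordinates, which holds because $L_\xi(f)$ depends only on the $k$-jet of $f$ at $\pi_{\mathbf A}(\xi)$ and the bump is identically $1$ on a neighborhood of the limit point.
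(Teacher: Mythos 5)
Your overall strategy for parts (1) and (3) matches the paper's: verify the metric axioms by inheriting them from $d_g$ and the weighted norm, then compare the metric topology with the manifold topology chartwise via the Taylor representation (\ref{eq:L_xi_f}). Where you genuinely diverge is part (2): you obtain the equivalence of the $\mathfrak d_w$- and $\mathfrak d_{\overline w}$-topologies by transitivity, after showing each agrees with the manifold topology, whereas the paper argues directly that the two metrics ``differ only in the weighted norms'' and invokes equivalence of norms on the finite-dimensional space $\mathbf A$ to get two-sided bounds $C_1\mathfrak d_w\leq\mathfrak d_{\overline w}\leq C_2\mathfrak d_w$. Your route is arguably safer: the two metrics also differ in the function ball over which the supremum is taken ($\mathfrak B^1_k(M)$ versus $\mathfrak B^1_0(M)$), so norm equivalence on $\mathbf A$ alone does not immediately yield the paper's bi-Lipschitz comparison; transitivity through the manifold topology sidesteps this, at the cost of having to run the chart argument for $\mathfrak d_{\overline w}$ as well, where — as you correctly flag — the absence of derivative bounds on $\mathfrak B^1_0(M)$ makes the factorial weights essential. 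Two further places where you are more careful than the paper: for the identity of indiscernibles you extract $\mathbb R$-linearity of $L_\xi$ from (\ref{Eq-0}) and rescale to pass from agreement on the unit ball to agreement on all of $C^\infty(M)$ (the paper simply asserts this step), and in the converse direction of part (3) you truncate the coordinate functions by a bump and rescale them into $\mathfrak B^1_k(M)$ before testing, justified by the jet-locality of $L_\xi$, whereas the paper tests against the locally defined $x_i$ directly. Both refinements close small gaps rather than change the argument's substance.
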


\begin{proof}

\textbf{ Verify that $\mathfrak{d}_w$ and $\mathfrak{d}_{\overline{w}}$ are Metrics.} We must verify the properties of a metric: non-negativity, symmetry, triangle inequality, and identity of indiscernibles.\\
\textbf{Non-negativity and Symmetry.}
Both $\mathfrak{d}_w$ and $\mathfrak{d}_{\overline{w}}$ are non-negative because both $d_g$ and the supremum terms are non-negative by construction.  Symmetry holds because $d_g(\vartheta, \varepsilon) = d_g(\varepsilon, \vartheta)$ and
$$ \sup_{f} \|L_\vartheta(f) - L_\varepsilon(f)\|_w = \sup_{f} \|L_\varepsilon(f) - L_\vartheta(f)\|_w, $$
since taking the absolute value inside the norm makes the differences symmetric.\\
\textbf{Triangle Inequality.} For $\vartheta, \varepsilon, \zeta \in M^\mathbf{A}$, we need to show that
$ \mathfrak{d}_w(\vartheta, \zeta) \leq \mathfrak{d}_w(\vartheta, \varepsilon) + \mathfrak{d}_w(\varepsilon, \zeta). $ We have:
$$ \mathfrak{d}_w(\vartheta, \zeta) = d_g(\pi(\vartheta), \pi(\zeta)) + \sup_{f \in \mathfrak{B}^1_k(M)} \|L_\vartheta(f) - L_\zeta(f)\|_w. $$

By the triangle inequality for $d_g$, we can split the Riemannian distance term:
$$ d_g(\pi(\vartheta), \pi(\zeta)) \leq d_g(\pi(\vartheta), \pi(\varepsilon)) + d_g(\pi(\varepsilon), \pi(\zeta)). $$

Using the linearity of $L_\xi$, we have $L_\vartheta(f) - L_\zeta(f) = (L_\vartheta(f) - L_\varepsilon(f)) + (L_\varepsilon(f) - L_\zeta(f))$. Thus,
\begin{align*}
	\sup_{f \in \mathfrak{B}^1_k(M)} \|L_\vartheta(f) - L_\zeta(f)\|_w &\leq \sup_{f \in \mathfrak{B}^1_k(M)} \|L_\vartheta(f) - L_\varepsilon(f)\|_w + \sup_{f \in \mathfrak{B}^1_k(M)} \|L_\varepsilon(f) - L_\zeta(f)\|_w.
\end{align*}
Combining these inequalities, we obtain
$ \mathfrak{d}_w(\vartheta, \zeta) \leq \mathfrak{d}_w(\vartheta, \varepsilon) + \mathfrak{d}_w(\varepsilon, \zeta). $
The same argument applies to $\mathfrak{d}_{\overline{w}}$.
\subsubsection*{Identity of indiscernibles:}
If $\mathfrak{d}_w(\vartheta, \varepsilon) = 0$, then $d_g(\pi(\vartheta), \pi(\varepsilon)) = 0 \implies \pi(\vartheta) = \pi(\varepsilon) = x$ for some $x \in M$, and\\ 
	 $\sup_{f \in \mathfrak{B}^1_k(M)} \|L_\vartheta(f) - L_\varepsilon(f)\|_w = 0 \implies \|L_\vartheta(f) - L_\varepsilon(f)\|_w = 0$ for all $f \in \mathfrak{B}^1_k(M)$. 
This implies $L_\vartheta(f) = L_\varepsilon(f)$ for all $f \in \mathfrak{B}^1_k(M)$. 
Since $\vartheta = ev_x + L_\vartheta$ and $\varepsilon = ev_x + L_\varepsilon$, the fact that $L_\vartheta(f) = L_\varepsilon(f)$ for all $f \in \mathfrak{B}^1_k(M)$ implies $\vartheta = \varepsilon$. A similar argument holds for $\mathfrak{d}_{\overline{w}}$. Therefore, $\mathfrak{d}_w$ and $\mathfrak{d}_{\overline{w}}$ satisfy all the properties of a metric.\\
\textbf{Show topological equivalence of $\mathfrak{d}_w$ and $\mathfrak{d}_{\overline{w}}$:} The metrics $\mathfrak{d}_w$ and $\mathfrak{d}_{\overline{w}}$ differ only in the weighted norms $\|\cdot\|_w$ and $\|\cdot\|_{\overline{w}}$. Since $\mathbf{A}$ is a finite-dimensional vector space over $\mathbb{R}$ (because $\mathcal{A}^{k+1} = 0$), all norms on $\mathbf{A}$ are equivalent.  Therefore, there exist constants $C_1, C_2 > 0$ such that:
$$ C_1 \|a\|_w \leq \|a\|_{\overline{w}} \leq C_2 \|a\|_w \quad \forall a \in \mathbf{A}. $$ 
This norm equivalence ensures that $\mathfrak{d}_w$ and $\mathfrak{d}_{\overline{w}}$ generate the same topology.  Specifically, for any $\vartheta, \varepsilon \in M^{\mathbf{A}}$, we have:
$C_1 \mathfrak{d}_w(\vartheta, \varepsilon) \leq \mathfrak{d}_{\overline{w}}(\vartheta, \varepsilon) \leq C_2 \mathfrak{d}_w(\vartheta, \varepsilon).$
This implies that if a sequence of points converges to $\vartheta$ with respect to $\mathfrak{d}_w$, it also converges to $\vartheta$ with respect to $\mathfrak{d}_{\overline{w}}$, and vice versa.\\
\textbf{Align the metric topology with the standard manifold topology:}
 Now, we  need to show that a sequence \(\{\xi_n\}\) converges to \(\xi\) with respect to the metric \(\mathfrak d_w\) if and only if it converges to \(\xi\) with respect to the original manifold topology on \(M^\mathbf A\).  Suppose \(\{\xi_n\}\) is a sequence of points in \(M^\mathbf A\) such that \(\xi_n \to \xi\) as \(n\to \infty\) with respect to the original topology. This means that for any open neighborhood \(U_\xi\) of \(\xi\) in \(M^\mathbf A\), there exists an integer \(n_0\) such that \(\xi_n \in U_\xi\) for all \(n \geq n_0\). Consider the decompositions
		$
		\xi_n = ev_{\pi_{\mathbf A }(\xi_n)} + L_{\xi_n},$ and $ \xi = ev_{\pi_{\mathbf A }(\xi)} + L_\xi.
		$ Since \(\pi_{\mathbf A }\) is continuous, the sequence \(x_n := \pi_{\mathbf A }(\xi_n)\) converges to \(\pi_{\mathbf A }(\xi) =: x \in M\) with respect to the metric \(d_g\). Since \(M\) is compact, there exists a local chart \((U, (x_1,\dots, x_{\dim M}))\) around \(x\) such that \((\pi_{\mathbf A }^{-1}(U), (x_{ij}(\xi))_{1\leq j\leq\dim \mathbf A, 1\leq i\leq \dim M})\) forms a local chart around \(\xi\). Here, \(x_{ij}(\xi)\) represents the \(j\)-th component of \(L_\xi(x_i)\) in some basis of the Weil algebra \(\mathbf{A}\). The manifold topology implies that \(x_{ij}(\xi_n) \to x_{ij}(\xi)\) as \(n \to \infty\) for each \((i,j)\), where the convergence occurs in \(\mathbb{R}\). This convergence \(x_{ij}(\xi_n) \to x_{ij}(\xi)\) is precisely the convergence \(L_{\xi_n}(x_i) \to L_\xi(x_i)\) in \(\mathbf{A}\). Equation (\ref{Eq-0}) can be used to deduce that for each \(i\) and each natural number \(\lambda\), \(L_{\xi_n}(x_i^\lambda) \to L_{\xi}(x_i^\lambda)\) as \(n \to \infty\). Now, consider the Taylor expansion of a smooth function \(f \in \mathfrak {B}^1_k(M)\) near \(\pi_{\mathbf A }(\xi) = x\):
		\[ f(y) = f(x) + \sum_{1 \le |\mathbf{i}| \le k} \frac{1}{|\mathbf{i}|!} \partial^{\mathbf{i}} f(x) (y - x)^{\mathbf{i}} + R_{k+1}(y),
		\]
		where the remainder \(R_{k+1}\) vanishes when acted on by the linear map and  \(k\) matches the order of \(\mathbf{A}\). With this information, the norm \( \Vert  L_{\xi_n}(f) - L_\xi(f) \Vert_w \) can be estimated as
		
		\[
		\Vert L_{\xi_k}(f) -  L_\xi(f) \Vert_w =  \Vert \sum_{1 \le |\mathbf{i}| \le k} \frac{1}{|\mathbf{i}|!} \partial^{\mathbf{i}} f(x_n) L_{\xi_n}(x^{\mathbf{i}})  -  \sum_{1 \le |\mathbf{i}| \le k} \frac{1}{|\mathbf{i}|!} \partial^{\mathbf{i}} f(x) L_{\xi}(x^{\mathbf{i}}) \Vert_w.
		\]
		Because the chart is smooth, the function is smooth and \( x_n \rightarrow x \) we know that \( \partial^{\mathbf{i}} f(x_k)  \rightarrow \partial^{\mathbf{i}} f(x)\).
		Therefore \( \mathfrak d_w(\xi_n , \xi) \rightarrow 0\) for \( n \rightarrow \infty\).  Conversely, assume that \(\mathfrak d_w(\xi_n, \xi) \to 0\) as \(n \to \infty\). This implies that \(d_g(\pi_{\mathbf A }(\xi_n), \pi_{\mathbf A }(\xi)) \to 0\), so \(\pi_{\mathbf A }(\xi_n) \to \pi_{\mathbf A }(\xi)\) in \(M\). Also, it implies that \(\sup_{f\in \mathfrak {B}^1_k(M)}\Vert L_{\xi_n}(f) -   L_\xi(f)\Vert_w \to 0\).
		Since convergence in the manifold topology is defined chartwise, it is enough to consider coordinate functions:
		$ \|L_{\xi_k}(x_i)-L_{\xi}(x_i)\|_w\leq \sup_{f\in \mathfrak {B}^1_k(M)}\Vert L_{\xi_n}(f) -   L_\xi(f)\Vert_w\rightarrow 0$.
		But \(L_{\xi_n}(x_i)\) and \(L_{\xi}(x_i)\) are precisely the components of \(\xi_n\) and \(\xi\) in the local chart around \(\xi\). Hence, \(\xi_n \to \xi\) in the manifold topology. We have shown that \(\mathfrak d_w\) is a metric on \(M^\mathbf A\) and that the metric topology induced by \(\mathfrak d_w\) is equivalent to the manifold topology on \(M^\mathbf A\). Therefore, \(M^\mathbf A\) is metrizable.

\end{proof}
\begin{theorem}\label{Comp-1}
	If \(M\) is a compact, connected, smooth manifold equipped with a Riemannian metric, and \(\mathbf{A}\) is a Weil algebra, then the metric space \((M^\mathbf{A}, \mathfrak d_w)\) is complete.
\end{theorem}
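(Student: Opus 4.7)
The plan is to take an arbitrary Cauchy sequence $\{\xi_n\}$ in $(M^\mathbf{A}, \mathfrak{d}_w)$, split the Cauchy condition into its base-manifold and fiber parts, extract a candidate limit point by working inside a single chart, and then invoke Lemma~\ref{P-1} to conclude that convergence in $\mathfrak{d}_w$ follows from convergence in the manifold topology.

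The base-point convergence is immediate: since $d_g(\pi_{\mathbf A}(\xi_n), \pi_{\mathbf A}(\xi_m)) \leq \mathfrak{d}_w(\xi_n, \xi_m)$, the sequence $x_n := \pi_{\mathbf A}(\xi_n)$ is Cauchy in $(M, d_g)$, and compactness of $M$ gives $x_n \to x$ for some $x \in M$. I then fix a chart $(U, \phi = (\phi_1,\ldots,\phi_m))$ about $x$, a smaller neighborhood $V$ with $\overline{V} \subset U$, and a bump function $\chi \in C^\infty(M)$ equal to $1$ on $V$ and vanishing outside $U$. Setting $\tilde\phi_i := \chi \phi_i$ (extended by zero to $M$) and choosing a constant $C > 0$ with $\tilde\phi_i/C \in \mathfrak{B}^1_k(M)$, the Cauchy condition on $\{\xi_n\}$ yields $\|L_{\xi_n}(\tilde\phi_i) - L_{\xi_m}(\tilde\phi_i)\|_w \to 0$ for each $i$.

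Since $\mathbf{A}$ is finite-dimensional (hence complete in any norm), these sequences converge in $\mathcal{A}$ to limits $b_i$. The key algebraic observation is that a Weil morphism depends only on the finite-order jet of a function at its base point: once $x_n \in V$ the functions $\tilde\phi_i$ and $\phi_i$ agree on a full neighborhood of $x_n$, so $L_{\xi_n}(\tilde\phi_i) = L_{\xi_n}(\phi_i)$. Combined with $x_n \to x$, this shows that the $\mathbf{A}$-valued chart coordinates $\phi^{\mathbf A}(\xi_n) = (\xi_n(\phi_1),\ldots,\xi_n(\phi_m))$ converge in $\mathbf{A}^m$, and the chart diffeomorphism $\phi^{\mathbf A}$ then pulls this back to a unique candidate limit $\xi \in \pi_{\mathbf A}^{-1}(U)$. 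Coordinate convergence implies $\xi_n \to \xi$ in the manifold topology, hence in $\mathfrak{d}_w$ by Lemma~\ref{P-1}, completing the proof.

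The main obstacle is the bridge from the supremum in $\mathfrak{d}_w$, which ranges over the infinite-dimensional space $\mathfrak{B}^1_k(M)$, to convergence of the finitely many $\mathbf{A}$-valued coordinates parametrizing a local chart. The bump-function construction resolves this by converting local coordinate functions into globally defined admissible test functions, and the Weil-algebra locality property lets us freely identify $L_{\xi_n}(\tilde\phi_i)$ with $L_{\xi_n}(\phi_i)$ for all large $n$; once that identification is in place, finite-dimensionality of $\mathbf{A}$ closes the argument.
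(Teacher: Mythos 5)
Your proof is correct, but it takes a genuinely different route from the paper's. The paper constructs the limit globally: after obtaining $\bar y = \lim \pi_{\mathbf A}(\xi_k)$, it considers the evaluation map $\mathcal M(\xi,f) = L_\xi(f)$, argues it is Lipschitz so that $L_{\xi_k}(f)$ is Cauchy in the finite-dimensional (hence complete) algebra $\mathbf A$ for every $f \in C^\infty(M)$, defines $\mu(f) := \lim_k L_{\xi_k}(f)$, checks that $\mu$ inherits the Leibniz rule so that $\bar\xi := ev_{\bar y} + \mu$ is a genuine point of $M^{\mathbf A}_{\bar y}$, and then asserts $\mathfrak d_w(\xi_k,\bar\xi) \to 0$. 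You instead localize: a bump function converts the chart coordinates $\phi_i$ into admissible test functions in $\mathfrak B^1_k(M)$, Morimoto-type locality identifies $L_{\xi_n}(\chi\phi_i)$ with the chart components of $\xi_n$ once $x_n$ enters $V$, finite-dimensionality of $\mathbf A$ gives convergence of the finitely many $\mathbf A$-valued coordinates, and the chart diffeomorphism $\phi^{\mathbf A}$ plus Lemma~\ref{P-1}(3) closes the argument. Your version buys something concrete: it sidesteps the two steps the paper leaves essentially unproved (the verification that $\mu$ satisfies the adapted Leibniz rule, and the final convergence $\mathfrak d_w(\xi_k,\bar\xi)\to 0$), replacing them with the already-established equivalence of the metric and manifold topologies; the cost is the chart/bump-function bookkeeping and an explicit reliance on the locality of Weil morphisms, which the paper only introduces later (as Morimoto's vanishing lemma). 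One small point worth making explicit in your write-up: the limit of $\phi^{\mathbf A}(\xi_n)$ in $\mathbf A^m$ does lie in the image of the chart $\phi^{\mathbf A}$, because its real part is $\phi(x)$ with $x \in U$ while the nilpotent part is unconstrained.
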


\begin{proof}
	Let \(\{\xi_k\}\) be a Cauchy sequence in \((M^\mathbf{A}, \mathfrak d_w)\). This means that for any \(\epsilon > 0\), there exists a natural number \(N\) such that for all \(m, k > N\), we have:
	$
	\mathfrak d_w(\xi_m, \xi_k) < \epsilon.
	$
	Our goal is to show that \(\{\xi_k\}\) converges to a limit in \(M^\mathbf{A}\). 
	The projection map \(\pi_{\mathbf A } :(M^\mathbf{A}, \mathfrak d_w)\rightarrow (M, d_g)\) is $1-$Lipschitz, meaning that
	$
	d_g(\pi_{\mathbf A }(\xi), \pi_{\mathbf A }(\eta)) \leq \mathfrak d_w(\xi, \eta),
	$
	for all \(\xi, \eta \in M^\mathbf A\).  Therefore, for any \(\epsilon > 0\),  if \(m, k > N\), we have:
	\[
	d_g(\pi_{\mathbf A }(\xi_m), \pi_{\mathbf A }(\xi_k)) \leq \mathfrak d_w(\xi_m, \xi_k) < \epsilon.
	\]
	This shows that the sequence \(y_k := \pi_{\mathbf A }(\xi_k)\) is a Cauchy sequence in \((M, d_g)\). 
	Since \(M\) is compact, it is complete.  Therefore, the Cauchy sequence \(\{y_k\}\) converges to some limit \(\bar y\in M\).  
	Now, we need to show that the infinitely near parts of the \(\xi_k\) also converge. To do this, we consider the map
	\[
	\mathcal M :(M^\mathbf{A}\times C^\infty(M), d^\ast) \rightarrow (\mathbf{A}, \Vert .\Vert_w) , \qquad (\xi, f)\mapsto L_\xi(f),
	\]
	where \(d^\ast := \mathfrak d_w + |.|_0\) is a metric on the Cartesian product \(M^\mathbf{A}\times C^\infty(M) \) and \(|.|_0\) is the uniform supremum metric. 
	To prepare our analysis, show that  the map \(\mathcal M\) is 1-Lipschitz. Alternatively, consider some elements $\xi, \zeta \in M^\mathbf{A}$ and $ f,g \in C^\infty(M)$. Then compute,
	$$
	\Vert \mathcal M(f, \xi) - \mathcal M(g,\zeta)\Vert_w := \Vert L_\xi(f) - L_\zeta(g)  \Vert_w
	 \leqslant   d^\ast ((f, \xi) , (g, \zeta) ).
	$$	
	
	For each \(f\in C^\infty(M)\), the sequence \(\theta_k := \mathcal M(\xi_k, f) = L_{\xi_k}(f) \) is a Cauchy sequence in \(\mathbf A\). (Because \(\{\xi_k\}\) is Cauchy in \(M^\mathbf{A}\) and \(\mathcal{M}\) is Lipschitz.) The Weil algebra \(\mathbf A \) is finite dimensional, it is complete guaranteeing that \(\theta_k = \mathcal M(\xi_k, f) \) converges to some \(\mu(f)\in \mathbf A \) for each \(f\). 
	By construction, the limit \(\mu(f)\) satisfies the adapted Leibniz rule
	$
		\mu(fg + \lambda h) = \mu(f) g(\bar y) + f(\bar y)\mu(g) + \nu(f)\mu(g) + \lambda \mu(h),
	$
	for all \(f , g, h\in C^\infty(M)\), and \(\lambda \in \mathbb R\). Therefore, the map \(\bar \xi(f) := f(\bar y) + \mu(f)\) belongs to \(M^\mathbf{A}_{\bar y}\).  Finally, we show that \(\mathfrak d_w(\xi_k,\bar\xi)\rightarrow 0\) as \(k\rightarrow\infty\). This completes the proof that \((M^\mathbf{A}, \mathfrak d_w)\) is complete.
\end{proof}
\begin{table}[h!]
	\centering
	\begin{tabular}{@{} p{2cm} p{5cm} p{8cm} @{}} % Use l for left-aligned columns
		\toprule
		Feature & Carnot-Carathéodory Metrics & Metric \(\mathfrak{d}_w\) \\
		\midrule
		Primary Focus & Motion Constraints & Infinitesimal Structure \\
		Construction & Path Optimization & Direct Formula \\
		Geometry & Singular & Riemannian-like (Anisotropic) \\
		Completeness & Generally Incomplete (unless compact or strong bracket-generating assumptions) & Complete (when $M$ is compact, Riemannian) \\
		Advantages & Enforcing Constraints & Analytical Tractability, Infinitesimal Encoding \\
		Limitations & Computationally Complex & Relies on Base Metric, Limited Constraints \\
		\bottomrule
	\end{tabular}
	\caption{Comparison of Carnot-Carathéodory Metrics and \(\mathfrak{d}_w\)}
	\label{tab:comparison}
\end{table}
\begin{remark}
	The choice of factorial decay effectively dampens the contribution of higher-order derivatives, preventing the supremum from diverging. Without the weighted norm, the  supremum term
	$
	\sup_{f \in \mathfrak{B}_0^1(M)} \|L_\vartheta(f) - L_\varepsilon(f)\|_w
	$
 can diverges due to unbounded derivatives. Functions in \(\mathfrak{B}_0^1(M)\)  can have arbitrarily large derivatives (e.g., \(f_n(x) = \sin(n^2 x)\) on \(M = S^1\), with \(|f_n|_0 \leq 1\) but \(\partial_x f_n \sim n^2\)).
 \end{remark}
\subsection{Lifting of curves}

In this section, we investigate how curves in the base manifold \(M\) can be lifted to curves in the Weil bundle \(M^\mathbf{A}\). This will allow us to relate the topological properties of \(M\) to those of \(M^\mathbf{A}\). Let \(\varepsilon_1, \varepsilon_2\in M^\mathbf A\).  We consider two cases:

 \textbf{Case 1: \(\varepsilon_1\) and \(\varepsilon_2\) lie in the same fiber.} Suppose that \(\varepsilon_1, \varepsilon_2\in M^\mathbf A_x\) for some \(x\in M\), with \(\varepsilon_1(f) = f(x) + L_1(f)\) and \(\varepsilon_2(f) = f(x) + L_2(f)\). Then, we can define a path \(\varepsilon_t\) in \(M^\mathbf A_x\) by
	\[
	\varepsilon_t(f) := f(x) + ((1-t) L_1(f) + t L_2(f)),
	\]
	for all \(t \in [0, 1]\) and all \(f\in C^\infty(M, \mathbb R)\).  It is easy to verify that \(\varepsilon_t \in M^\mathbf A_x\) for all \(t\).\\
	 \textbf{Case 2: \(\varepsilon_1\) and \(\varepsilon_2\) lie in different fibers.} Suppose that \(\varepsilon_1\in M^\mathbf A_x\) and \(\varepsilon_2\in M^\mathbf A_y\) with \(\varepsilon_1(f) = f(x) + L(f)\) and \(\varepsilon_2(f) = f(y) + G(f)\) for some \(x,y\in M\). Let \(\gamma\) be a smooth curve in \(M\) from \(x\) to \(y\). We can define a path \(\varepsilon_\gamma(t)\in M^\mathbf A_{\gamma(t)}\) by: 
	$
	\varepsilon_\gamma(t)(f) = f(\gamma(t)) + ((1-t)L(f) + t G(f)),
	$
	for all \(f\in C^\infty(M, \mathbb R)\) and for each \(t \in [0, 1]\). Let \(\{t_j\}\subseteq [0,1]\) be a sequence that converges to \(s\). Then, 
\begin{equation}
	\mathfrak  d_w(\varepsilon_\gamma(t_j), \varepsilon_\gamma(s)) := 	 d_g(\gamma(t_j),  \gamma(s)) + \sup_{f\in \mathfrak {B}^1_k(M)}\Vert  ((1-t_j)L(f) + t_j G(f)) - ((1-s)L(f) - s G(f))\Vert_w, 
\end{equation}
$$ =  d_g(\gamma(t_j),  \gamma(s)) + \sup_{f\in \mathfrak{B}^1_k(M)}\Vert  (s-t_j)L(f) + (-s + t_j)G(f)\Vert_w $$
$$ =  d_g(\gamma(t_j),  \gamma(s)) +  \sup_{h\in \mathfrak{B}^{1,j}_k(M)}\Vert  (L- G)(h)\Vert_w,$$
for $j$ sufficiently large and fixed, where $\mathfrak {B}^{1,j}_k(M)$ is the image of  $\mathfrak{B}^{1}_k(M)$ under the map $f\mapsto (s-t_j)f$. As $j$ tends to infinity,  $\mathfrak{B}^{1,j}_k(M)\longrightarrow \lbrace 0\rbrace$. Thus,
$$\lim_{j\longrightarrow\infty}\mathfrak  d_w(\varepsilon_\gamma(t_j), \varepsilon_\gamma(s)) := 	\lim_{j\longrightarrow\infty} d_g(\gamma(t_j),  \gamma(s)) + \lim_{j\longrightarrow\infty}\sup_{h\in \mathfrak {B}^{1,j}_k(M)}\Vert  (L- G)(h)\Vert_{w}
 = 0 + \sup_{h\in \lbrace 0 \rbrace}\Vert  (L- G)(h)\Vert_{w} = 0.$$

\begin{lemma}\label{L-1}
	Let \(M\) be a smooth compact manifold and \(\mathbf A\) a Weil algebra.
	\begin{enumerate}
		\item If \(M\) is connected, then so is \(M^\mathbf A\).
		\item If \(M\) is path-connected, then so is \(M^\mathbf A\).
		\item If \(M\) is simply connected, then so is \(M^\mathbf A\).
	\end{enumerate}
\end{lemma}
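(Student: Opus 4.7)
Parts (1) and (2) fall out almost immediately from the two path constructions set up right before the statement. Given $\varepsilon_1,\varepsilon_2\in M^{\mathbf A}$, I first invoke the fact that a connected smooth manifold is automatically path-connected to pick a smooth curve $\gamma:[0,1]\to M$ with $\gamma(0)=\pi_{\mathbf A}(\varepsilon_1)$ and $\gamma(1)=\pi_{\mathbf A}(\varepsilon_2)$ (in Case~1 this $\gamma$ is constant). I then feed $\gamma$ into the formula $\varepsilon_\gamma(t)(f)=f(\gamma(t))+(1-t)L(f)+tG(f)$, whose continuity with respect to $\mathfrak d_w$ was established in the computation immediately above the lemma. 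This produces a continuous path from $\varepsilon_1$ to $\varepsilon_2$ in $(M^{\mathbf A},\mathfrak d_w)$, which by Lemma~\ref{P-1} is the manifold topology. Hence $M^{\mathbf A}$ is path-connected, proving~(2), and path-connectedness implies connectedness, proving~(1).

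For (3), the plan is to upgrade $\pi_{\mathbf A}:M^{\mathbf A}\to M$ to a locally trivial fiber bundle with contractible fibers and then read off the answer from the long exact homotopy sequence. The local charts $\phi^{\mathbf A}:\pi_{\mathbf A}^{-1}(U)\to \mathbf A^m$ from Section~2 split, via the decomposition $\mathbf A=\mathbb R\oplus \mathcal A$, as
\[
\xi\longmapsto\bigl(\phi(\pi_{\mathbf A}(\xi)),\,(L_\xi(\phi_1),\dots,L_\xi(\phi_m))\bigr)\in \phi(U)\times \mathcal A^m,
\]
giving a local trivialization of $\pi_{\mathbf A}$ with typical fiber $\mathcal A^m$. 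Continuity of the transition maps on overlaps is a direct consequence of the Leibniz-type identity~(\ref{Eq-0}) (explicitly illustrated by the $(\mathbb R^2)^{\mathbb R[\epsilon]/(\epsilon^3)}$ example). Since $\mathcal A$ is a finite-dimensional real vector space, the fiber $\mathcal A^m\cong\mathbb R^{m(\dim\mathbf A-1)}$ is contractible.

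Once the bundle structure is pinned down, the long exact sequence of the fibration $\mathcal A^m\hookrightarrow M^{\mathbf A}\xrightarrow{\pi_{\mathbf A}}M$ reads
\[
\cdots\to\pi_k(\mathcal A^m)\to\pi_k(M^{\mathbf A})\xrightarrow{(\pi_{\mathbf A})_\ast}\pi_k(M)\to\pi_{k-1}(\mathcal A^m)\to\cdots,
\]
and contractibility of the fiber forces $\pi_k(\mathcal A^m)=0$ for all $k\ge 0$, so $(\pi_{\mathbf A})_\ast$ is an isomorphism in every degree. Specializing to $k=1$ yields $\pi_1(M^{\mathbf A})\cong\pi_1(M)=0$, giving~(3). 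A more hands-on alternative, avoiding the exact-sequence machinery, is to contract a given loop $\eta\subset M^{\mathbf A}$ in two steps: first contract $\pi_{\mathbf A}\circ\eta$ in $M$ (possible since $\pi_1(M)=0$), lift that null-homotopy to $M^{\mathbf A}$ via the homotopy-lifting property of a locally trivial bundle over a paracompact base, then kill the residual loop inside the single fiber $M^{\mathbf A}_{x_0}$ using the straight-line homotopy of Case~1.

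The step I expect to be the main obstacle is the rigorous verification that $\pi_{\mathbf A}$ is genuinely a locally trivial bundle (not merely a smooth submersion with vector-space fibers): the transition formulas for the nilpotent part involve derivatives of the base-chart change and are algebraically tangled, as already visible in the $\epsilon,\epsilon^2$ example. Everything else reduces to standard topological facts—connected manifolds are path-connected, metric path-connectedness gives connectedness, and the homotopy sequence of a bundle with contractible fiber makes the base and total space weakly equivalent.
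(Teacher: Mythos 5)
Your proposal is correct overall, but it diverges from the paper's proof in two places worth noting. For part (2) you do exactly what the paper does: feed a base curve into the convex-combination lift $\varepsilon_\gamma(t)(f)=f(\gamma(t))+(1-t)L(f)+tG(f)$ and use the continuity computation preceding the lemma. For part (1), however, the paper argues directly: it supposes $M^{\mathbf A}=\mathcal O_1\cup\mathcal O_2$ is a disconnection, fixes a continuous section $S$ of $\pi_{\mathbf A}$, and pulls the disconnection back to $M$ via $S^{-1}$; you instead deduce connectedness from path-connectedness (using that a connected manifold is path-connected). Your route is arguably the safer one, since the paper's section argument needs the section to actually meet both $\mathcal O_i$, which is not justified there. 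For part (3) the difference is more substantial: the paper stays elementary and lifts a homotopy $H(s,t)$ in $M$ explicitly by the formula $\tilde H(s,t)(f)=f(H(s,t))+(1-s)L_{\xi_\gamma(t)}(f)+sL_{\xi_\theta(t)}(f)$, whereas you invoke the long exact homotopy sequence of the fibration with contractible fiber $\mathcal A^m$ — which is precisely the machinery the paper saves for Theorem \ref{Top-1}. The trade-off: the paper's explicit lift avoids any bundle-theoretic input but implicitly restricts attention to loops of the special lifted form $\xi_\gamma$ and leaves unverified that the convex combination of $L$-operators based at different points still satisfies the Leibniz rule (\ref{Eq-0}) at the moving base point $H(s,t)$; your argument handles arbitrary loops uniformly but shifts the burden onto local triviality of $\pi_{\mathbf A}$, which you rightly flag as the main obstacle and which the paper itself asserts without proof when it later calls $M^{\mathbf A}\to M$ a fibration. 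Since your part (3) subsumes the paper's Theorem \ref{Top-1} for $k=1$, in the context of the paper it is slightly heavier than needed here, but it is not circular and is mathematically sound once local triviality is established.
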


\begin{proof}
	\begin{enumerate}
		\item \textbf{Connectedness:}  
		
		Assume  $\mathcal{O}_1 $ and $\mathcal{O}_2 $  are two disjoint nonempty open subsets such that  $M^\mathbf A = \mathcal{O}_1\cup \mathcal{O}_2$. Since $M$
		is connected, fix a continuous section $S$ of  $\pi_\mathbf A$, and consider $\mathcal{U}_i := S^{-1}(\mathcal{O}_i)\cap M$, for $i = 1,2$. Each $\mathcal{U}_i$ is nonempty, and  open in $M$:   $ \mathcal{U}_1\cup \mathcal{U}_2 = S^{-1}(\mathcal{O}_1\cup \mathcal{O}_2)\cap M = S^{-1}(M^\mathbf A)\cap M = M$, and $ \mathcal{U}_1\cap \mathcal{U}_2 = S^{-1}(\mathcal{O}_1\cap \mathcal{O}_2)
		\cap M = S^{-1}(\emptyset)\cap M  = \emptyset.$ This contradicts the connectedness of $M$.
		\item \textbf{Path-Connectedness:} Let \(x, y\in M\), pick \(\xi\in M^\mathbf A_x\) and \(\eta\in M^\mathbf A_y\) such that \(\xi(f) = f(x) + L_\xi(f)\) and \(\eta(f) = f(y) + L_\eta(f)\) for all smooth functions \(f\) on \(M\). Consider \(\gamma\) to be a continuous curve in \(M\) from \(x\) to \(y\), and define
		$
		\xi_{\gamma}(s)(f) = f(\gamma(s)) + (1-s)L_\xi(f) + s L_\eta(f) \in M^{\mathbf A}_{\gamma(s)},
		$
		for all smooth functions \(f\) on \(M\). For each \(s \in [0,1]\), we have \(\xi_{\gamma}(0) = \xi\) and \(\xi_{\gamma}(1) = \eta\). The map \(s \mapsto \xi_{\gamma}(s)\) is a continuous path in \(M^\mathbf{A}\) with respect to the metric \(\mathfrak d_w\). This lifting map provides us with a continuous path between any two points in \(M^\mathbf{A}\).
		
		\item \textbf{Simple connectedness:} Consider two curves \(\xi_\gamma\) and \(\xi_\theta\) in \(M^\mathbf A\) with the same endpoints. Then, \(\gamma(t) := \pi_\mathbf A(\xi_\gamma(t))\) and \(\theta(t) := \pi_\mathbf A(\xi_\theta(t))\) are two curves in \(M\) with the same endpoints. Since \(M\) is simply connected, there exists a homotopy \(H(s,t)\) between \(\gamma\) and \(\theta\). We lift the homotopy $H$ into a homotopy $ \tilde H$  between \(\xi_\gamma\) and \(\xi_\theta\) defined as:\\
		$
		\tilde H(s, t)(f) = f(H(s,t)) + (1-s)L_{\xi_\gamma(t)}(f) + s L_{\xi_\theta(t)}(f),
		$
		for all smooth functions \(f\) on \(M\). It can be shown that the map \(\tilde H\) is continuous.
	\end{enumerate}
\end{proof}

\begin{figure}
	\begin{center}
		\begin{tikzpicture}[scale= 0.7]
			% Base manifold M
			\draw[thick] (0,0) ellipse (2 and 1);
			\node at (1, 1) {\(M\)};
			
			% Curve in M
			\draw[thick, blue] (-1.5,0) to[out=30, in=150] (1.5,0);
			\node[blue] at (-0.4, 0.5) {\(\gamma(t)\)};
			
			% Fiber over x
			\draw[thick, dashed] (0,0) -- (0,3);
			\draw[thick] (0,3) ellipse (0.5 and 0.2);
			\node at (0.7, 3.3) {\(M^\mathbf{A}_x\)};
			
			% Lifted curve in M^A
			\draw[thick, red] (-1.5,3) to[out=30, in=150] (1.5,3);
			\node[red] at (0, 3.6) {\(\varepsilon_\gamma(t)\)};
			
			% Projection map
			\draw[->, thick] (0,3) to[out=-90, in=90] (0,0);
			\node at (-0.5, 1.5) {\(\pi_\mathbf{A}\)};
			
			% Weil bundle M^A
			\draw[thick] (0,3) ellipse (2 and 1);
			\node at (0, 4.5) {\(M^\mathbf{A}\)};
		\end{tikzpicture}
	\end{center}
	\caption{Lifting of curve from  $M$  to  \(M^\mathbf{A}\).}
	\label{fig: Lifting-M- M^A}
\end{figure}
\begin{table}[h!]
	\centering
	\begin{tabularx}{\textwidth }{@{} p{4cm} p{4cm} p{4.9cm} p{3cm}@{}} % Use tabularx, width=textwidth
		\toprule
		Property of M & Property of M\textsuperscript{A} & Conditions/Caveats & Result Proven? \\
		\midrule
		Smooth Manifold & Smooth Manifold & \text{By definition of Weil bundles} & Yes \\
		Complete & Complete (w.r.t. $\mathfrak{d}_w$) &  \text{M compact, connected, smooth}  & Yes \\
		Connected & Connected & & Yes \\
		Path-Connected & Path-Connected & & Yes \\
		Simply Connected & Simply Connected & & Yes \\
		Boundary $\neq \emptyset$ &  \text{Boundary $\neq \emptyset$}, $\bigcup_{x\in \partial M} M^\mathbf{A}_x \subseteq \partial( M^\mathbf{A})$&	 & Yes \\
		Riemannian Metric  & Riemannian Metric  & M and $M\textsuperscript{A}$   & Yes  \\
		\bottomrule
	\end{tabularx}
	\caption{Properties of Manifold M vs. Weil Bundle $M^\mathbf{A}$}
	\label{tab:M_vs_MA}
\end{table}

\section{Dynamics}

In this section, we investigate the relationship between the fixed points of a diffeomorphism of \(M\) and the fixed points of its Weil lifting on  \(M^\mathbf{A}\). This analysis will provide insights into how the infinitesimal structure captured by the Weil bundle affects the dynamics of diffeomorphisms. Let \(M, N\) be two smooth manifolds, and let \(\phi : M\longrightarrow N\) be a smooth map. Then, we can define a smooth map \(\phi^{\mathbf{A}} : M^\mathbf{A}\longrightarrow N^\mathbf{A}\) by : 
$
\phi^{\mathbf{A}}(\xi)(h) := \xi(h\circ \phi),
$
for all \(h\in C^\infty(N)\). This map \(\phi^{\mathbf{A}}\) is called the Weil lifting of \(\phi\).	We begin by showing that isometries are preserved under Weil lifting. 

\begin{lemma}\label{L-1-2}
	Let \((M, g_M), (N, g_N)\) be two smooth compact connected Riemannian manifolds, let \(\mathbf A\) be a Weil algebra, and let \(\phi : (M, g_M)\longrightarrow (N, g_N)\) be an isometry. Then,
	$
	\mathfrak d_w^N (\phi^{\mathbf{A}}(\xi_1), \phi^{\mathbf{A}}(\xi_2)) = \mathfrak  d_w^M(\xi_1, \xi_2),
	$
	for all \(\xi_1,\xi_2\in M^\mathbf{A}\).
\end{lemma}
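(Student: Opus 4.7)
The plan is to decompose the distance $\mathfrak{d}_w$ into its two summands and show that each is preserved by $\phi^{\mathbf A}$, exploiting the functoriality of the Weil lifting together with the isometry assumption. The argument is structurally simple, with one delicate verification about the behavior of the $C^k$ unit ball under pullback.

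First I would record the functorial identities that drive the proof. Directly from the definition $\phi^{\mathbf A}(\xi)(h) = \xi(h \circ \phi)$, if $\xi \in M^{\mathbf A}_x$ has decomposition $\xi = ev_x + L_\xi$, then $\phi^{\mathbf A}(\xi) \in N^{\mathbf A}_{\phi(x)}$ and
$$L_{\phi^{\mathbf A}(\xi)}(h) \;=\; \phi^{\mathbf A}(\xi)(h) - h(\phi(x)) \;=\; \xi(h \circ \phi) - (h \circ \phi)(x) \;=\; L_\xi(h \circ \phi),$$
for every $h \in C^\infty(N)$. In particular $\pi^N_{\mathbf A} \circ \phi^{\mathbf A} = \phi \circ \pi^M_{\mathbf A}$, and $\phi^{\mathbf A}$ is a bijection with inverse $(\phi^{-1})^{\mathbf A}$ since $\phi$ is a diffeomorphism.

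Next I would handle the two summands of $\mathfrak{d}_w$ separately. Setting $x_i = \pi^M_{\mathbf A}(\xi_i)$, the Riemannian part follows immediately from the isometry property, namely $d_{g_N}(\phi(x_1), \phi(x_2)) = d_{g_M}(x_1, x_2)$. For the supremum part, substituting the identity $L_{\phi^{\mathbf A}(\xi_i)}(h) = L_{\xi_i}(h \circ \phi)$ gives
$$\sup_{h \in \mathfrak{B}^1_k(N)} \bigl\| L_{\phi^{\mathbf A}(\xi_1)}(h) - L_{\phi^{\mathbf A}(\xi_2)}(h) \bigr\|_w \;=\; \sup_{h \in \mathfrak{B}^1_k(N)} \bigl\| L_{\xi_1}(h \circ \phi) - L_{\xi_2}(h \circ \phi) \bigr\|_w,$$
and the desired equality reduces to checking that the pullback $\phi^{*}: h \mapsto h \circ \phi$ maps $\mathfrak{B}^1_k(N)$ bijectively onto $\mathfrak{B}^1_k(M)$. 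Since $\phi^{-1}$ is also an isometry, it suffices to verify invariance of the unit ball condition in one direction.

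The delicate step, which I expect to be the main obstacle, is precisely this ball bijection $\phi^{*} \mathfrak{B}^1_k(N) = \mathfrak{B}^1_k(M)$. The definition in the paper is phrased with coordinate partial derivatives, which are not themselves isometry invariant, so one must interpret it intrinsically. The natural fix is to read the $C^k$ bound via iterated covariant derivatives with respect to the Levi--Civita connection; since a Riemannian isometry intertwines Levi--Civita connections, one has $\nabla^{g_M, r}(h \circ \phi) = \phi^{*}(\nabla^{g_N, r} h)$ for $r \le k$, and pointwise norms are preserved by $\phi$. Alternatively, working in Riemannian normal coordinates at corresponding points $x$ and $\phi(x)$, where $\phi$ becomes a linear orthogonal transformation, yields the same conclusion and is patched together using compactness of $M$. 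Either route delivers $\phi^{*} \mathfrak{B}^1_k(N) = \mathfrak{B}^1_k(M)$, and summing the two preserved terms completes the proof.
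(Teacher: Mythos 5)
Your proposal is correct and follows essentially the same route as the paper: the functorial identity \(L_{\phi^{\mathbf A}(\xi)}(h) = L_{\xi}(h\circ\phi)\) obtained from uniqueness of the decomposition \(\xi = ev_x + L_\xi\), the isometry identity for the base distance, and the claim that the pullback \(\phi^\ast\) carries \(\mathfrak{B}^1_k(N)\) bijectively onto \(\mathfrak{B}^1_k(M)\). The one place you go beyond the paper is the ball-invariance step, which the paper simply asserts (and states with an apparent typo, \(\phi^\ast(\mathfrak{B}^1_0(N)) = \mathfrak{B}^1_k(M)\)); your observation that the coordinate-derivative definition of \(\mathfrak{B}^1_k\) is not literally isometry-invariant and must be read via covariant derivatives or normal coordinates is a genuine gap in the paper's argument that your version correctly identifies and repairs.
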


\begin{proof}
	It suffices to show that
	\[
	\sup_{f\in \mathfrak {B}^1_K(M)}\Vert L_{\xi_1}(f) - L_{\xi_2}(f)\Vert_w = \sup_{h\in \mathfrak {B}^1_k(N)}\Vert L_{\phi^{\mathbf{A}}(\xi_1)}(h) - L_{\phi^{\mathbf{A}}(\xi_2)}(h)\Vert_w.
	\]
	
	Since \(\phi\) is an isometry, the pullback map \(\phi^\ast\) induces a bijection such that \(\phi^\ast(\mathfrak {B}^1_0(N)) = \mathfrak {B}^1_k(M)\). Thus,
	$$
		\sup_{f\in \mathfrak {B}^1_k(M)}\Vert L_{\xi_1}(f) - L_{\xi_2}(f)\Vert_w = \sup_{f\in \phi^\ast(\mathfrak {B}^1_k(N))}\Vert L_{\xi_1}(f) - L_{\xi_2}(f)\Vert_w \\
		= \sup_{h\in \mathfrak {B}^1_k(N)}\Vert L_{\xi_1}(h\circ\phi) - L_{\xi_2}(h\circ \phi)\Vert_w.
	$$
	
	To conclude, we need to show that \(L_{\xi_1}(h\circ\phi) = L_{\phi^{\mathbf{A}}(\xi_1)}(h)\). By the decomposition property,
	$
	\phi^{\mathbf{A}}(\xi_1)(h) = ev_{\pi_N(\phi^{\mathbf{A}}(\xi_i))}(h) + L_{\phi^{\mathbf{A}}(\xi_i)}(h).
	$
	On the other hand, by the definition of the map \(\phi^{\mathbf{A}}\), we have
	$
	\phi^{\mathbf{A}}(\xi_i)(h) = \xi_i(h\circ\phi) = ev_{\pi_M(\xi_i)}(h\circ\phi) + L_{\xi_i}(h\circ \phi).
	$
	By the uniqueness of the decomposition, we have \(L_{\phi^{\mathbf{A}}(\xi_i)}(h) = L_{\xi_i}(h\circ \phi)\) for each \(i = 1,2\). 
\end{proof}

We now establish a connection between the fixed points of a diffeomorphism on $M$ and the fixed points of its Weil lifting on \(M^\mathbf{A}\). We want to show that if a diffeomorphism \(\phi\) fixes a whole region \(W\) in \(M\), then its Weil lifting \(\phi^\mathbf{A}\) fixes all the infinitely near points "sitting above" that region in \(M^\mathbf{A}\). Conversely, if \(\phi^\mathbf{A}\) fixes some infinitely near point, its "shadow" on \(M\) must be a fixed point of \(\phi\). To do this, we will invoke the \textbf{Morimoto's Vanishing Lemma}. This is a fundamental result, due to Morimoto (see Remark 1.3 in \cite{Mo}), which states that:  If a smooth function \(f\) is identically zero in a neighborhood of a point \(x\in M\), then for all \(\xi\in M^\mathbf A_x\), we have \(\xi(f) = 0\).
\begin{lemma}\label{L-1-3}
	Let \(M\) be a smooth compact connected manifold, \(\mathbf A\) a Weil algebra, and \(\phi : M \longrightarrow M\) be a diffeomorphism. Let $ Fix(\phi) := \{x\in M \mid \phi(x) = x\}.$  If \(Fix(\phi)\) is nonempty and contains a neighborhood \(W\), then  \( Fix(\phi^{\mathbf{A}}) := \{\xi \in M^\mathbf{A} \mid \phi^\mathbf{A}(\xi) = \xi\}\), is nonempty and contains \(\bigcup_{x\in W} M^\mathbf{A}_x\). Conversely, if \(\phi : M \longrightarrow M\) is a diffeomorphism such that \(Fix(\phi^{\mathbf{A}})\neq \emptyset\), then \(\pi_{\mathbf{A}}(Fix(\phi^{\mathbf{A}})) \subseteq Fix(\phi)\).  
\end{lemma}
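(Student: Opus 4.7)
The plan is to prove the two inclusions directly from the defining formula $\phi^{\mathbf{A}}(\xi)(h) = \xi(h \circ \phi)$, using Morimoto's Vanishing Lemma (already recalled in the text) for the forward implication and the naturality identity $\pi_{\mathbf{A}} \circ \phi^{\mathbf{A}} = \phi \circ \pi_{\mathbf{A}}$ for the converse. No analytic machinery beyond this is needed.

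For the forward direction, I fix $x \in W$ and an arbitrary $\xi \in M^{\mathbf{A}}_x$, and aim to show that $\phi^{\mathbf{A}}(\xi)(h) = \xi(h)$ for every $h \in C^\infty(M)$. The trick is to consider the auxiliary function $g := h \circ \phi - h$. Since $W \subseteq \mathrm{Fix}(\phi)$ is open, $\phi$ restricts to the identity on the whole open neighborhood $W$ of $x$, so $g$ vanishes identically on $W$. Morimoto's Vanishing Lemma applied to $g$ at the base point $x$ then yields $\xi(g) = 0$. Because $\xi$ is $\mathbb{R}$-linear, this rewrites as $\xi(h \circ \phi) = \xi(h)$, i.e.\ $\phi^{\mathbf{A}}(\xi)(h) = \xi(h)$. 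Since $h$ was arbitrary, $\phi^{\mathbf{A}}(\xi) = \xi$, and therefore $\bigcup_{x \in W} M^{\mathbf{A}}_x \subseteq \mathrm{Fix}(\phi^{\mathbf{A}})$; in particular this set is nonempty (any fiber over a point of $W$ works).

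For the converse, I first record the naturality identity $\pi_{\mathbf{A}} \circ \phi^{\mathbf{A}} = \phi \circ \pi_{\mathbf{A}}$. This is immediate from the decomposition (\ref{Decom}): for $\xi \in M^{\mathbf{A}}_x$, projecting the equality $\phi^{\mathbf{A}}(\xi)(h) = \xi(h \circ \phi) = h(\phi(x)) + L_\xi(h \circ \phi)$ onto the real part of $\mathbf{A}$ gives $\pi_{\mathbf{A}}(\phi^{\mathbf{A}}(\xi)) = \phi(x) = \phi(\pi_{\mathbf{A}}(\xi))$. Hence, if $\xi \in \mathrm{Fix}(\phi^{\mathbf{A}})$ and $x := \pi_{\mathbf{A}}(\xi)$, applying $\pi_{\mathbf{A}}$ to both sides of $\phi^{\mathbf{A}}(\xi) = \xi$ yields $\phi(x) = x$, so $x \in \mathrm{Fix}(\phi)$ and thus $\pi_{\mathbf{A}}(\mathrm{Fix}(\phi^{\mathbf{A}})) \subseteq \mathrm{Fix}(\phi)$.

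The main conceptual obstacle is clarifying why the hypothesis "$W$ is an \emph{open} neighborhood of fixed points" is indispensable in the forward direction, whereas the converse only requires a set-theoretic projection. Morimoto's Vanishing Lemma kills $\xi(g)$ only when $g$ vanishes on some open neighborhood of $x$; at an isolated fixed point the difference $g = h \circ \phi - h$ can have nonzero jets of any order at $x$, and those jets are precisely what the nilpotent part of $\xi \in M^{\mathbf{A}}_x$ detects through the Leibniz rule (\ref{Eq-0}). This asymmetry is the reason the statement is formulated as a one-sided inclusion $\pi_{\mathbf{A}}(\mathrm{Fix}(\phi^{\mathbf{A}})) \subseteq \mathrm{Fix}(\phi)$ rather than an equality of fixed-point sets, and it is worth flagging explicitly in the write-up so that the reader does not mistakenly expect a bijection.
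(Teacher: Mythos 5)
Your proof is correct and takes essentially the same route as the paper's: the forward direction rests, in both cases, on applying Morimoto's Vanishing Lemma to the function \(f\circ\phi - f\), which vanishes on the open set \(W\), and the converse rests on the observation that the base point of \(\phi^{\mathbf{A}}(\xi)\) is \(\phi(\pi_{\mathbf{A}}(\xi))\). The only difference is presentational --- you argue directly with the algebra morphisms and the naturality identity \(\pi_{\mathbf{A}}\circ\phi^{\mathbf{A}} = \phi\circ\pi_{\mathbf{A}}\), whereas the paper packages the identical computation as showing \(\mathfrak{d}_w(\phi^{\mathbf{A}}(\xi),\xi)=0\) and then invoking the identity of indiscernibles for the metric.
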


\begin{proof}
	Pick \(x\in W\) and let \(\xi\in M^\mathbf{A}_x\). We compute
$$
		\mathfrak  d_w (\phi^{\mathbf{A}}(\xi), \xi) = d_g(\phi(x), x) + \sup_{f\in \mathfrak {B}^1_k(M)}\Vert L_{\phi^{\mathbf{A}}(\xi)}(f) - L_{\xi}(f)\Vert_w = 
	  \sup_{f\in \mathfrak {B}^1_k(M)}\Vert L_{\xi}(f\circ \phi) - L_{\xi}(f)\Vert_w 
		$$ $$= \sup_{f\in \mathfrak {B}^1_k(M)}\Vert L_{\xi}(f\circ \phi - f)\Vert_w.
$$
	
	Since  the function \(z\mapsto (f\circ \phi - f)(z)\) is identically zero on \(W\) for each \(f\in \mathfrak {B}^1_k(M)\), by Morimoto's vanishing lemma, we have \(L_{\xi}(f\circ \phi - f) = 0\) for each \(f\in \mathfrak {B}^1_k(M)\). This implies that \(\mathfrak d_w (\phi^{\mathbf{A}}(\xi), \xi) = 0\) for all \(\xi\in M^\mathbf{A}_x\); that is, \(\phi^{\mathbf{A}}(\xi) = \xi\). Conversely, pick \(x\in \pi_{\mathbf{A}}(Fix(\phi^{\mathbf{A}}))\), and let \(\xi\in Fix(\phi^{\mathbf{A}})\cap \pi_{\mathbf{A}}^{-1}(\{x\})\). Then,
	\begin{align*}
		0 &= \mathfrak  d_w (\phi^{\mathbf{A}}(\xi), \xi) = d_g(\phi(x), x) + \sup_{f\in \mathfrak {B}^1_k(M)}\Vert L_{\phi^{\mathbf{A}}(\xi)}(f) - L_{\xi}(f)\Vert_w \\
		&= d_g(\phi(x), x) + \sup_{f\in \mathfrak {B}^1_k(M)}\Vert L_{\xi}(f) - L_{\xi}(f)\Vert_w = d_g(\phi(x), x).
	\end{align*}
	Thus, \(\phi(x) = x\).
\end{proof}
Lemma \ref{L-1-3} means that if an equilibrium state \(x\) is surrounded by other equilibrium states, then all infinitesimal perturbations of \(x\) are also stable under the lifted dynamics \(\phi^\mathbf{A}\). Also, if there exists even one stable infinitesimal perturbation, then its "shadow" on the base manifold \(M\) must be an equilibrium state. When the manifold \(M\)  in Lemma \ref{L-1-3} is a quotient space like \(S^1\), the fixed points are representatives of equivalence classes, but we still consider them as isolated fixed points within the topology of \(M\). \\
  
\textbf{Example 1: Rotation Map on \(S^1\) (Instability).} Let \(M = S^1\) (the circle), parameterized by \(\theta \in [0, 2\pi)\). Let \(\mathbf{A} = \mathbb{R}[\epsilon]/(\epsilon^2)\), so \(M^\mathbf{A} \cong TS^1\). Consider the rotation map \(\phi : S^1 \to S^1\) given by \(\phi(\theta) = \theta + \alpha\), where \(\alpha\) is a constant angle. An element \(\xi \in TS^1\) can be written as \(\xi = \theta + \epsilon v\), where \(\theta \in S^1\) and \(v \in \mathbb{R}\) represents a tangent vector at \(\theta\). The Weil lifting \(\phi^\mathbf{A} : TS^1 \to TS^1\) acts as follows:
$
\phi^\mathbf{A}(\xi) = \phi^\mathbf{A}(\theta + \epsilon v) = (\theta + \alpha) + \epsilon v.
$ The rotation map shifts the base point by \(\alpha\) but leaves the tangent vector unchanged. For \(\xi\) to be a fixed point of \(\phi^\mathbf{A}\), we need \(\phi^\mathbf{A}(\xi) = \xi\); that is:
$
(\theta + \alpha) + \epsilon v = \theta + \epsilon v.
$ This implies \(\theta + \alpha = \theta \pmod{2\pi}\), which means \(\alpha = 0 \pmod{2\pi}\).  In this case every single element is a fixed point (a "stable region"). However, if \(\alpha \neq 0 \pmod{2\pi}\), there are no fixed points for \(\phi^\mathbf{A}\). This is because even though the tangent vector is preserved, the base point \(\theta\) is always shifted by \(\alpha\), preventing \(\xi\) from being a fixed point. Therefore, when \(Fix(\phi)= \emptyset\) because it is assumed that \(\alpha \neq 0 \pmod{2\pi}\), then  by Lemma \ref{L-1-3}, 
\(Fix(\phi^\mathbf{A}) = \emptyset\). \\

\textbf{Example 2: Reflection on \(S^1\) (Stability).}
Let us consider \(M = S^1\) again, but this time, let \(\phi(\theta) = -\theta\).  The fixed points of \(\phi\) are \(\theta = 0\) and \(\theta = \pi\). Let \(\xi = \theta + \epsilon v \in TS^1\). Then the Weil lifting acts as: $
\phi^\mathbf{A}(\xi) = \phi^\mathbf{A}(\theta + \epsilon v) = -\theta - \epsilon v.
$ The reflection flips both the base point and the direction of the tangent vector. To find the fixed points of \(\phi^\mathbf{A}\), we need \(\phi^\mathbf{A}(\xi) = \xi\): $
-\theta - \epsilon v = \theta + \epsilon v.
$ This gives us two conditions: \(-\theta = \theta \pmod{2\pi}\), which means \(\theta = 0\) or \(\theta = \pi\), and 
\(-v = v\), which means \(v = 0\). (So, the only fixed points on \(TS^1\) are the zero tangent vectors at \(\theta = 0\) and \(\theta = \pi\).) Therefore, \(Fix(\phi^\mathbf{A}) = \{0 + \epsilon \cdot 0, \pi + \epsilon \cdot 0\}\). In this case, \(Fix(\phi^\mathbf{A})\) is non-empty. The fixed points of \(\phi^\mathbf{A}\) correspond to the fixed points of \(\phi\) with no infinitesimal motion (\(v = 0\)).  This reflects physical intuition: a stationary point (no velocity) is stable under reflection, but any infinitesimal motion would flip direction and destabilize. While \(\phi\) fixes two points on \(S^1\), \(\phi^\mathbf{A}\) fixes only their "trivial" tangent vectors. This aligns with Lemma \ref{L-1-3}: \(Fix(\phi^\mathbf{A})\) fibers over \(Fix(\phi)\), but the infinitesimal data must also remain unchanged.

\begin{center}
	\begin{figure}[h!]
		\centering
		\begin{tikzpicture}[scale= 0.8]
			% Base manifold S^1
			\draw[thick] (0,0) circle (1.5);
			\node at (0, 2) {\(S^1\)};
			\filldraw (1.5,0) circle (1.5pt) node[below] {\(\theta = 0\)};
			\filldraw (-1.5,0) circle (1.5pt) node[below] {\(\theta = \pi\)};
			
			% Rotation map
			\draw[->, thick] (1.5,0) arc (0:180:1.5);
			\node at (0, 1.7) {\(\phi(\theta) = \theta + \alpha\)};
			
			% Fiber over theta = 0
			\draw[thick, dashed] (1.5,0) -- (1.5,3);
			\draw[thick] (1.5,3) ellipse (0.5 and 0.2);
			\node at (2, 3.2) {\(M^\mathbf{A}_0\)};
			\filldraw (1.5,3) circle (1.5pt) node[above] {\(\xi\)};
			
			% Fixed point of phi^A
			\draw[->, thick] (1.5,3) arc (0:180:0.5);
			\node at (1.5, 3.9) {\(\phi^\mathbf{A}(\xi) = \xi\)};
			
			% Weil bundle M^A
			\draw[thick] (1.5,3) ellipse (1.5 and 0.5);
			\node at (1.5, 4.5) {\(M^\mathbf{A}\)};
		\end{tikzpicture}
		\caption{Fixed points of the rotation map \(\phi(\theta) = \theta + \alpha\) on \(S^1\) and its Weil lifting \(\phi^\mathbf{A}\) on \(M^\mathbf{A}\).}
		\label{fig:rotation-fixed-points}
	\end{figure}
\end{center}

\section{Topological invariants of \(M^\mathbf{A}\)}
The results on path lifting and connectedness suggest a strong relationship between the topology of \(M\) and the topology  of  \(M^\mathbf{A}\). The following theorems, using the powerful tools of the Leray spectral sequence and long exact sequence of homotopy groups, make this relationship precise.

\subsection{The group \(Diff^\infty(M^\mathbf A)\)}
Let \(Diff^\infty(M^\mathbf A)\) denote the group of all smooth diffeomorphisms from \(M^\mathbf A\) to \(M^\mathbf A\). We shall always assume that \(M\) is compact connected and equipped with a Riemannian metric \(g\).
The Weil functor \(F : Diff(M) \rightarrow Diff(M^\mathbf{A})\) maps diffeomorphisms on the base manifold \(M\) to diffeomorphisms on the Weil bundle \(M^\mathbf{A}\).   The Weil functor preserves the identity map and composition:
 \(id_M^\mathbf{A} = id_{M^\mathbf{A}}\) and 
	 \((\psi \circ \phi)^\mathbf{A} = \psi^\mathbf{A} \circ \phi^\mathbf{A}\). 
These properties ensure that \(F\) is indeed a functor from the category of smooth manifolds to itself.
The map \(F\) can be interpreted as a jet prolongation of diffeomorphisms, extending them to the Weil bundle.
\begin{center}
\begin{figure}[h!]
	\centering
	\begin{tikzpicture}[scale= 0.6]
		% Objects
		\node (M) at (0,0) {\(M\)};
		\node (MA) at (6,0) {\(M^\mathbf{A}\)};
		\node (N) at (0,-3) {\(N\)};
		\node (NA) at (6,-3) {\(N^\mathbf{A}\)};
		
		% Arrows
		\draw[->, thick] (M) to node[midway, above, xshift= -0.3cm] {$\phi$} (N); % Added yshift
		\draw[->, thick] (MA) to node[midway, above, xshift= 0.3cm] {$\phi^\mathbf{A}$} (NA); % Added yshift
		\draw[->, thick] (M) to node[midway, left, yshift=0.2cm] {$F$} (MA); %Added xshift
		\draw[->, thick] (N) to node[midway, right, yshift=-0.2cm] {$F$} (NA); %Added xshift
		
		% Label
		\node at (3, -1.5) {Commutative };
	\end{tikzpicture}
	\caption{The commutative diagram for \(F\) : Relationship between $Diff(M)$ and $ Diff(M^\mathbf{A})$.}
	\label{fig:weil-functor}
\end{figure}
\end{center}
\subsection*{The \(C^0\)-Topology} Let \(Homeo(M)\) denote the group of all homeomorphisms of \(M\) equipped with the compact-open topology. This is the metric topology induced by the distance : 
$
d_0^M(f,h) = \max(d_{C^0}(f,h),d_{C^0}(f^{-1},h^{-1})),
$
where
$
d_{C^0}(f,h) =\sup_{x\in M}d (h(x),f(x)).
$
On the space of all continuous paths \(\lambda:[0,1]\rightarrow Homeo(M)\) such that \(\lambda(0) = id_M\), we consider the \(C^0\)-topology as the metric topology induced by the metric:\\
$
\bar{d}_M(\lambda,\mu) = \max_{t\in [0,1]}d_0^M(\lambda(t),\mu(t)).
$

\subsection{A topology on \(Diff^\infty(M^\mathbf A)\)}

For each \(\tilde\phi \in Diff^\infty(M^\mathbf A)\) and for each \(\epsilon > 0\), we define a neighborhood \(U_{\tilde\phi,\epsilon}\) of \(\tilde\phi\) of order \(\epsilon\) as:
\[
U_{\tilde\phi,\epsilon} := \{\tilde\psi\in Diff^\infty(M^\mathbf A) \mid \max\{\mathfrak d_w (\tilde\phi(\xi), \tilde\psi(\xi)), \mathfrak d_w (\tilde\phi^{-1}(\xi), \tilde\psi^{-1}(\xi))\} < \epsilon, \forall\xi\in M^\mathbf A \}.
\]
We shall call the topology induced by the above family of open sets the "pointwise topology (p.w.t.)". \\ The following  result  makes $F$ a powerful tool for studying the relationship between the diffeomorphism group of $M$  and its Weil bundle $M^\mathbf A$. 
\begin{theorem}\label{Dyn-2}
	Let \(M\) be a smooth compact connected manifold and \(\mathbf A\) a Weil algebra. The map
	\[
	F : Diff^\infty(M)\longrightarrow Diff^\infty(M^\mathbf A), \qquad \phi\mapsto \phi^\mathbf A
	\]
	is continuous with respect to the \(C^0\)-topology on \(Diff^\infty(M)\) and the p.w.t. on \(Diff^\infty(M^\mathbf A)\).
\end{theorem}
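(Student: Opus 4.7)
The plan is to establish continuity of $F$ at an arbitrary fixed $\phi \in Diff^\infty(M)$. Given $\epsilon > 0$, I would produce $\delta > 0$ such that whenever $d_0^M(\psi, \phi) < \delta$, the pointwise condition $\psi^\mathbf{A} \in U_{\phi^\mathbf{A}, \epsilon}$ holds, that is, both $\mathfrak{d}_w(\phi^\mathbf{A}(\xi), \psi^\mathbf{A}(\xi)) < \epsilon$ and $\mathfrak{d}_w((\phi^\mathbf{A})^{-1}(\xi), (\psi^\mathbf{A})^{-1}(\xi)) < \epsilon$ uniformly over $\xi \in M^\mathbf{A}$. Functoriality $(\phi^\mathbf{A})^{-1} = (\phi^{-1})^\mathbf{A}$ and the observation that $d_0^M$ simultaneously controls $\phi - \psi$ and $\phi^{-1} - \psi^{-1}$ in $C^0$ let one treat only the first inequality, applying the same argument to $\phi^{-1}, \psi^{-1}$ afterwards.

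For fixed $\xi \in M^\mathbf{A}$ with base point $x = \pi_\mathbf{A}(\xi)$, I would split $\mathfrak{d}_w(\phi^\mathbf{A}(\xi), \psi^\mathbf{A}(\xi))$ into the Riemannian base contribution $d_g(\phi(x), \psi(x))$ and the infinitesimal supremum over $f \in \mathfrak{B}^1_k(M)$. The base contribution is bounded immediately by $d_{C^0}(\phi, \psi) \leq d_0^M(\phi, \psi) < \delta$, uniformly in $\xi$. For the infinitesimal supremum, the identity $L_{\phi^\mathbf{A}(\xi)}(f) = L_\xi(f\circ\phi)$ established in the proof of Lemma~\ref{L-1-2} recasts it as $\sup_{f \in \mathfrak{B}^1_k(M)} \|L_\xi(f\circ\phi - f\circ\psi)\|_w$. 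Using the Taylor-jet formula (\ref{eq:L_xi_f}), $L_\xi$ becomes a fixed finite linear combination of partial derivatives of orders $1, \dots, k$ evaluated at $x$, with coefficients determined by the components of $\xi$ in a local chart. The integral representation $(f\circ\phi)(z) - (f\circ\psi)(z) = \int_0^1 df\bigl(\psi(z) + t(\phi(z)-\psi(z))\bigr)\cdot(\phi(z)-\psi(z))\,dt$, together with the uniform bound $|f|_{C^k} \leq 1$ on $\mathfrak{B}^1_k(M)$, then controls the required quantity in terms of $d_{C^0}(\phi, \psi)$ and quantities depending on the fixed $\phi$.

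The main obstacle is the uniformity in $\xi$ demanded by the p.w.t.: the coefficients of $L_\xi$ in formula (\ref{eq:L_xi_f}) depend on the components of $\xi$ in local coordinates, and $\xi$ ranges over fibers of $M^\mathbf{A}$ that are non-compact. The resolution is meant to come from exploiting the weighted norm $\|\cdot\|_w$, which (as discussed in the remark following Theorem~\ref{Comp-1}, via equivalence with $\|\cdot\|_{\overline{w}}$) scales the infinitesimal directions in a controlled way, combined with the chain-rule expansion of $\partial^{\mathbf{i}}(f\circ\phi - f\circ\psi)(x)$ in which every derivative of $f$ is bounded by $1$. The delicate point is arranging the estimate so that the resulting constant $C = C(\phi)$ depends only on the fixed $\phi$ (through its $C^k$-norm on the compact manifold $M$) and not on the particular $\xi$, so that $\|L_\xi(f\circ\phi - f\circ\psi)\|_w \leq C(\phi)\, d_{C^0}(\phi, \psi)$ holds uniformly in $\xi \in M^\mathbf{A}$ and $f \in \mathfrak{B}^1_k(M)$, closing the argument by choosing $\delta = \epsilon/(1 + C(\phi))$.
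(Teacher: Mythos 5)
Your skeleton matches the paper's: split $\mathfrak{d}_w(\phi^\mathbf{A}(\xi),\psi^\mathbf{A}(\xi))$ into the base term $d_g(\phi(x),\psi(x))$ plus the supremum term, rewrite the latter via $L_{\phi^\mathbf{A}(\xi)}(f)=L_\xi(f\circ\phi)$, and dispose of inverses through $(\phi^\mathbf{A})^{-1}=(\phi^{-1})^\mathbf{A}$. The difference is that the paper argues sequentially and only for a \emph{fixed} $\xi$ (it shows $\mathfrak{d}_w(\phi_i^\mathbf{A}(\xi),\phi^\mathbf{A}(\xi))\to 0$ for each $\xi$ by appealing to continuity of $f\mapsto L_\xi(f)$ under the uniform convergence $f\circ\phi_i\to f\circ\phi$), whereas you correctly observe that the neighborhoods $U_{\tilde\phi,\epsilon}$ demand an estimate uniform over $\xi\in M^\mathbf{A}$ and try to produce one. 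That instinct is right, but the two steps you propose to close the argument both fail.

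First, by (\ref{eq:L_xi_f}) the quantity $L_\xi(f\circ\phi-f\circ\psi)$ is built from the partial derivatives of $f\circ\phi-f\circ\psi$ up to order $k$ at $x$. Your integral representation controls only the $C^0$-size of $f\circ\phi-f\circ\psi$; after the chain rule, the order-$\mathbf{i}$ derivatives contain factors of the form $\partial^{\mathbf{j}}\phi-\partial^{\mathbf{j}}\psi$, and smallness of $d_{C^0}(\phi,\psi)$ gives no control on these (take $\psi$ equal to $\phi$ perturbed by a small-amplitude, high-frequency bump: $d_{C^0}$ is tiny while first derivatives differ by an arbitrary amount). So a bound $\|L_\xi(f\circ\phi-f\circ\psi)\|_w\le C(\phi)\,d_{C^0}(\phi,\psi)$ is not available; you would need $C^k$-closeness of $\psi$ to $\phi$, which the hypothesis (the $C^0$-topology on $Diff^\infty(M)$) does not supply. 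Second, even granting derivative control, the constant cannot be made independent of $\xi$: the coefficients $\epsilon^{\mathbf{i}}=L_\xi(x^{i_1})\cdots L_\xi(x^{i_r})$ in (\ref{eq:L_xi_f}) range over the fiber, an unbounded vector space, so $\sup_{\xi\in M^\mathbf{A}}\|L_\xi(g)\|_w=\infty$ for any $g$ with a nonvanishing first derivative at some point; the weighted norm rescales the infinitesimal directions by fixed positive constants and cannot absorb this unboundedness. The paper's own proof avoids both issues only by treating the target topology as genuinely pointwise in $\xi$ and by not unpacking which topology on $C^\infty(M)$ makes $f\mapsto L_\xi(f)$ continuous; your more quantitative formulation exposes that the uniform estimate you aim for does not follow from the stated hypotheses, so the proposal as written does not close.
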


\begin{proof}
	Let \(\{\phi_i\}\) be a sequence in \(Diff^\infty(M)\) that converges to \(\phi\in Diff^\infty(M)\). Pick \(\xi\in M^\mathbf A\). Using the linearity of the map \(f\mapsto L_{\xi}(f)\), we compute
	\begin{align*}
		\mathfrak d_w (\phi^{\mathbf{A}}_i(\xi), \phi^{\mathbf{A}}(\xi)) &= d_g(\phi_i(\pi_{\mathbf{A}}(\xi)), \phi(\pi_{\mathbf{A}}(\xi))) + \sup_{f\in \mathfrak {B}^1_k(M)}\Vert L_{\phi^{\mathbf{A}}_i(\xi)}(f) - L_{\phi^{\mathbf{A}}(\xi)}(f)\Vert_w \\
		&= d_g(\phi_i(\pi_{\mathbf{A}}(\xi)), \phi(\pi_{\mathbf{A}}(\xi))) + \sup_{f\in \mathfrak {B}^1_k(M)}\Vert L_{\xi}(f\circ \phi_i) - L_{\xi}(f\circ \phi)\Vert_w \\
		&= d_g(\phi_i(\pi_{\mathbf{A}}(\xi)), \phi(\pi_{\mathbf{A}}(\xi))) + \sup_{f\in \mathfrak {B}^1_k(M)}\Vert L_{\xi}(f\circ \phi_i - f\circ \phi)\Vert_w.
	\end{align*}
	
	On the other hand, the sequence of smooth functions $h_i := f\circ \phi_i$ uniformly converges to the smooth function  $ f\circ \phi$. Thus, using the continuity of the map $f\mapsto L_{\xi}(f)$ we derive that 
	\[
\lim_{i\to\infty}	\mathfrak d_w (\phi^{\mathbf{A}}_i(\xi), \phi^{\mathbf{A}}(\xi)) = \lim_{i\to\infty}d_g(\phi_i(\pi_{\mathbf{A}}(\xi)), \phi(\pi_{\mathbf{A}}(\xi))) + \lim_{i\to\infty}
\sup_{f\in \mathfrak {B}^1_k(M) }\Vert L_{\xi}(f\circ \phi_i - f\circ \phi)\Vert_w.
= 0,	\]
	for each \(\xi\in M^\mathbf A\).	We can use similar arguments to show that \(\mathfrak d_w ((\phi^{\mathbf{A}}_i)^{-1}(\xi), (\phi^{\mathbf{A}})^{-1}(\xi))\rightarrow 0\) as \(i\rightarrow \infty\). This is because for a bijective map \(\phi\), we have \((\phi^{\mathbf{A}})^{-1} = (\phi^{-1})^{\mathbf{A}}\). 
\end{proof}
\begin{table}[h!]
	\centering
	\begin{tabularx}{\textwidth}{@{}p{4cm} p{5cm} p{5cm}p{2cm} @{}} % Use tabularx, width=textwidth
		\toprule
		\text{	Property of $Diff^k(M)$} 
		$(C^{0}-$Topology) & \text{Property of} $F(Diff^k(M))$ (Pointwise topology) & Conditions/Caveats & Result Proven? \\
		\midrule
		N/A  & \text{Continuous map} $Diff^k(M)$$\xrightarrow{F} $$F(Diff^k(M))$  & $F(\phi) = \phi^\mathbf{A}$ is the Weil lifting. & Yes \\
		Connected & Connected & & Implied by Continuity \\
		Path-Connected & Path-Connected & & Implied by Continuity \\
		Compact & Compact & & Implied by Continuity \\
		Locally Connected & Locally Connected & & Yes \\
		Local Contractibility & Local Contractibility & & Yes \\
		Transitive Group Action & Transitive Group Action & Generally NO. Strong conditions required. Consider local transitivity. & No \\
		\bottomrule
	\end{tabularx}
	\caption{Properties of $Diff^k(M)$ vs. $F(Diff^k(M))\subseteq Diff^k(M^\mathbf{A})$}
	\label{tab:DiffM_vs_DiffMA}
\end{table}
\newpage

The following results are  motivated by Lemma \ref{L-1} which  tells us that $	\pi_l(M^\mathbf{A}) \cong \pi_l(M) $ for $l =0, 1$.  
\begin{theorem}\label{Top-1}
	Let \( M \) be a smooth manifold. The homotopy groups satisfy:
	$$
	\pi_k(M^\mathbf{A}) \cong \pi_k(M) \quad \text{for all } k \geq 0.
	$$
\end{theorem}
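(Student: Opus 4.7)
My plan is to identify $\pi_\mathbf{A} \colon M^\mathbf{A} \to M$ as a locally trivial smooth fiber bundle whose fibers are contractible, and then to deduce the theorem from the long exact sequence of homotopy groups. Before doing anything else, I note the smooth section $s_0 \colon M \to M^\mathbf{A}$, $s_0(x)(f) := f(x)\cdot 1_\mathbf{A}$, which will automatically make $(\pi_\mathbf{A})_*$ split surjective on every $\pi_k$; the substance of the theorem is therefore the injectivity.

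First, I would establish local triviality. For a chart $(U,(x^1,\dots,x^m))$ on $M$, the construction of Section 2.3 produces coordinates $(x_{ij})$ on $\pi_\mathbf{A}^{-1}(U)$ satisfying $x_{i1}(\zeta) = x^i(\pi_\mathbf{A}(\zeta))$, while the remaining $x_{ij}$ (for $j \geq 2$) record the coefficients of $L_\zeta(x^i) \in \mathcal{A}$ in a fixed basis $\alpha_2,\dots,\alpha_l$. These assemble into a diffeomorphism $\Phi_U \colon \pi_\mathbf{A}^{-1}(U) \xrightarrow{\cong} U \times \mathcal{A}^m$ commuting with the projection, exhibiting $\pi_\mathbf{A}$ as a smooth fiber bundle with typical fiber $\mathcal{A}^m$. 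Since $M$ is compact (hence paracompact), $\pi_\mathbf{A}$ is in particular a Serre fibration.

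Second, I would argue that the fiber is contractible. The combination of Morimoto's vanishing lemma, the Taylor expansion (\ref{eq:L_xi_f}), the Leibniz-type relation (\ref{Eq-0}), and the nilpotency $\mathcal{A}^{k+1}=0$ should show that any $\zeta \in M^\mathbf{A}_x$ is uniquely determined by, and can be freely prescribed by, the $m$-tuple $(L_\zeta(x^1),\dots,L_\zeta(x^m)) \in \mathcal{A}^m$ in any chart at $x$. Hence $M^\mathbf{A}_x \cong \mathcal{A}^m$ as topological spaces, a finite-dimensional real vector space, and in particular contractible. The associated long exact sequence
\[
\cdots \to \pi_{k+1}(M) \to \pi_k(M^\mathbf{A}_x) \to \pi_k(M^\mathbf{A}) \xrightarrow{(\pi_\mathbf{A})_*} \pi_k(M) \to \pi_{k-1}(M^\mathbf{A}_x) \to \cdots
\]
then collapses because every $\pi_k(M^\mathbf{A}_x)$ vanishes, yielding the desired isomorphism for $k \geq 1$; the case $k = 0$ follows from Lemma \ref{L-1}(1), or equivalently from path-connectedness of the fiber combined with the section $s_0$.

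The main obstacle I anticipate is the precise verification that the Leibniz-type relation (\ref{Eq-0}) does not impose any additional nonlinear constraint beyond specifying $L_\zeta$ on coordinate functions. One has to show that every element $(a_1,\dots,a_m) \in \mathcal{A}^m$ extends uniquely to an $\mathbb{R}$-algebra morphism $C^\infty(M) \to \mathbf{A}$ lying over $ev_x$. The natural route is to exhibit the extension through the truncated Taylor expansion (\ref{eq:L_xi_f}), to check via Morimoto's lemma that the construction is chart-independent, and to observe that the truncation forced by $\mathcal{A}^{k+1}=0$ makes all the formal manipulations finite and well-defined. Once this fiber identification is in hand, the remainder of the argument is standard Serre-fibration machinery.
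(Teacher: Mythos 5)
Your proposal is correct and follows essentially the same route as the paper: exhibit $\pi_\mathbf{A}\colon M^\mathbf{A}\to M$ as a fibration with contractible fiber and read off the isomorphisms from the long exact sequence of homotopy groups. You supply details the paper leaves implicit (local triviality, the section $s_0$, the $k=0$ case), and your identification of the fiber as $\mathcal{A}^m$ is in fact more accurate than the paper's $\mathbf{A}^{\dim M}$, though both are contractible so the argument is unaffected.
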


\begin{proof}
	The proof relies on the long exact sequence of homotopy groups for a fibration. The bundle \(M^\mathbf{A} \to M\) is a fibration. The key observation is that the fiber, \(\mathbf{A}^{\dim M}\), is contractible (as a vector space). This contractibility implies that the homotopy groups of the fiber are trivial; that is, \(\pi_k(\mathbf{A}^{\dim M}) = 0\) for all \(k \geq 0\). Consider the long exact sequence of homotopy groups associated with the fibration \(M^\mathbf{A} \to M\):
	
	\begin{center}
		\begin{tikzpicture}[scale=0.7, node distance=1.5cm]
			% Nodes
			\node (M1) at (-3,0) {$\cdots$};
			\node (Mk) at (0,0) {$\pi_{k+1}(M)$};
			\node (FAk) at (3,0) {$\pi_k(\mathbf{A}^{\dim M})$};
			\node (MAk) at (6,0) {$\pi_k(M^\mathbf{A})$};
			\node (Mk1) at (9,0) {$\pi_k(M)$};
			\node (FAk1) at (12,0) {$\pi_{k-1}(\mathbf{A}^{\dim M})$};
			\node (M2) at (15,0) {$\cdots$};
			
			% Arrows
			\draw[->] (M1) -- (Mk);
			\draw[->] (Mk) -- (FAk);
			\draw[->] (FAk) -- (MAk);
			\draw[->] (MAk) -- (Mk1);
			\draw[->] (Mk1) -- (FAk1);
			\draw[->] (FAk1) -- (M2);
		\end{tikzpicture}
	\end{center}
	
	This is the long exact sequence of homotopy groups associated with the fibration \(M^\mathbf{A} \to M\), where the fiber is \(\mathbf{A}^{\dim M}\). Since \(\pi_k(\mathbf{A}^{\dim M}) = 0\) for all \(k\), the long exact sequence simplifies significantly. Thus, segments of the sequence become:
	$
	\dots \to \pi_{k+1}(M) \to 0 \to \pi_k(M^\mathbf{A}) \to \pi_k(M) \to 0 \to \dots
	$
	This implies that the maps \(\pi_k(M^\mathbf{A}) \to \pi_k(M)\) are isomorphisms for all \(k \geq 0\). Consequently, we can invoke the long exact sequence of homotopy groups for a fibration to deduce that \(\pi_k(M^\mathbf{A}) \cong \pi_k(M)\) for all \(k \geq 0\) \cite{Hatcher2002}.
\end{proof}
\begin{remark}\label{Leray}
	The Leray spectral sequence is a powerful tool for computing the cohomology of a space by relating it to the cohomology of another space through a map between them \cite{Getzler1994}. 
	The Leray spectral sequence for the projection map \(\pi_M: M^\mathbf{A} \to M\) has the following \(E_2\) page:
	$
	E_2^{p,q} = H^p(M; R^q \pi_M* \mathbb{R}),
	$ where \(R^q \pi_M* \mathbb{R}\) is the \(q\)-th higher direct image sheaf of the constant sheaf \(\mathbb{R}\) on \(M^\mathbf{A}\) under the map \(p\).  Specifically, for each point \(x \in M\), \(R^q \pi_M* \mathbb{R}_x\) is the cohomology \(H^q(\pi_M^{-1}(x); \mathbb{R})\) of the fiber \(\pi_M^{-1}(x)\) over \(x\). 
	Since the fibers of \(\pi_M\) are contractible (each is isomorphic to $ \mathbf{A}^{\dim M}$ ), we have:
	$
	H^q(\pi_M^{-1}(x); \mathbb{R}) =
	\begin{cases}
		\mathbb{R} & \text{if } q = 0 \\
		0 & \text{if } q > 0.
	\end{cases}
	$\\
	Therefore, the higher direct image sheaves simplify to:
	$
	R^q \pi_M* \mathbb{R} =
	\begin{cases}
		\mathbb{R} & \text{if } q = 0 \\
		0 & \text{if } q > 0.
	\end{cases}
	$
	
	Consequently, the \(E_2\) page becomes: $
	E_2^{p,q} =
	\begin{cases}
		H^p(M; \mathbb{R}) & \text{if } q = 0 \\
		0 & \text{if } q > 0.
	\end{cases}
	$\\
	
	This indicates that the Leray spectral sequence collapses at the \(E_2\) page, meaning \(E_2 = E_\infty\).
\end{remark}

\begin{theorem}\label{Top-2}
		Let \(M\) be a smooth compact connected manifold and \(\mathbf A\) a Weil algebra.
	For the bundle \( M^\mathbf{A} \to M \), the spectral sequence collapses at \( E_2 \):
	$
	E_2^{p,q} = H^p(M, H^q(\mathbf{A}^{\dim M})) \implies H^{p+q}(M^\mathbf{A}).
	$
	Since \( H^q(\mathbf{A}^{\dim M}) = 0 \) for \( q > 0 \), we recover:
	$
	H^*(M^\mathbf{A}) \cong H^*(M).
	$
\end{theorem}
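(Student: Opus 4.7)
The plan is to apply the Leray spectral sequence associated with the projection $\pi_\mathbf{A} : M^\mathbf{A} \to M$, following exactly the outline sketched in Remark \ref{Leray}. First I would verify that the hypotheses of Leray's theorem are met: from Proposition \ref{prop-1} and the local chart construction of Section 2, $\pi_\mathbf{A}$ is a locally trivial smooth fiber bundle over the compact Hausdorff base $M$ with typical fiber $\mathcal{A}^{\dim M}$, an $\mathbb{R}$-vector space. In particular it is a proper continuous map, so the Leray spectral sequence with constant sheaf coefficients is available and converges to $H^{p+q}(M^\mathbf{A};\mathbb{R})$.

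Next I would compute the higher direct image sheaves $R^q (\pi_\mathbf{A})_* \underline{\mathbb{R}}$. For each $x \in M$, the stalk at $x$ is identified with $H^q(\pi_\mathbf{A}^{-1}(x);\mathbb{R})$, and the fiber $\pi_\mathbf{A}^{-1}(x) = M^\mathbf{A}_x$ is an affine space modelled on $\mathcal{A}^{\dim M}$, hence contractible. So the stalkwise cohomology is $\mathbb{R}$ in degree $0$ and $0$ in positive degrees. Using local triviality, this stalk computation upgrades to an identification of sheaves: $R^0 (\pi_\mathbf{A})_* \underline{\mathbb{R}} \cong \underline{\mathbb{R}}$ on $M$ and $R^q (\pi_\mathbf{A})_* \underline{\mathbb{R}} = 0$ for $q > 0$. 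Substituting into the $E_2$ page yields
$$E_2^{p,q} = H^p\bigl(M;\, R^q (\pi_\mathbf{A})_* \underline{\mathbb{R}}\bigr) = \begin{cases} H^p(M;\mathbb{R}) & q = 0, \\ 0 & q > 0. \end{cases}$$

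Since every differential $d_r : E_r^{p,q} \to E_r^{p+r,\, q-r+1}$ for $r \geq 2$ has either source or target in a zero row, all such differentials vanish and the sequence collapses at $E_2$, i.e. $E_2 = E_\infty$. The induced filtration on $H^n(M^\mathbf{A};\mathbb{R})$ has a single nonzero quotient, namely $E_\infty^{n,0} = H^n(M;\mathbb{R})$, so no extension problem arises; the edge homomorphism is precisely the pullback $\pi_\mathbf{A}^*$, which therefore yields the desired isomorphism $H^*(M^\mathbf{A}) \cong H^*(M)$.

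The main obstacle I anticipate is the rigorous identification of the higher direct image sheaves $R^q (\pi_\mathbf{A})_* \underline{\mathbb{R}}$, which requires combining the stalkwise contractibility of each $M^\mathbf{A}_x$ with the local triviality of $\pi_\mathbf{A}$ in order to conclude that the sheaf itself (not merely its stalks) vanishes in positive degrees. Once this sheaf-theoretic step is firmly established, the collapse at $E_2$ and the absence of an extension problem are formal consequences of the shape of the $E_2$ page, and the theorem follows. The parallelism with the homotopy-group argument in Theorem \ref{Top-1}, where contractibility of the fiber forces triviality via the long exact sequence, provides a useful sanity check that the geometric input is being used correctly.
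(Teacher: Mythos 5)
Your proposal is correct and follows essentially the same route as the paper's proof: the Leray spectral sequence for \(\pi_\mathbf{A}\), contractibility of the fiber \(\mathbf{A}^{\dim M}\) killing all rows \(q>0\), collapse at \(E_2\), and the trivial filtration giving \(H^*(M^\mathbf{A})\cong H^*(M)\). You are in fact somewhat more careful than the paper in upgrading the stalkwise computation to an identification of the higher direct image sheaves via local triviality and in identifying the edge homomorphism with \(\pi_\mathbf{A}^*\), but the argument is the same.
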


	\begin{proof}
		The Weil bundle \( M^\mathbf{A} \) is a fiber bundle over \( M \) with fiber \( F \cong \mathbf{A}^{\dim M} \). As \( \mathbf{A} \) is a finite-dimensional real vector space, the fiber \( F \) is contractible. This implies trivial cohomology groups:
		\[
		H^q(F) \cong 
		\begin{cases} 
			\mathbb{R} & \text{if } q = 0, \\
			0 & \text{if } q > 0.
		\end{cases}
		\]
		
The Leray spectral sequence for \(M^\mathbf{A} \to M\) has \(E_2\) page:
		$
		E_2^{p,q} \cong H^p\big(M, H^q(F)\big) \implies H^{p+q}(M^\mathbf{A}).
		$
		For \( q > 0 \), all terms \( E_2^{p,q} \) vanish, leaving only the bottom row:
		$
		E_2^{p,0} \cong H^p(M).
		$ Since all differentials \( d_r \) for \( r \geq 2 \) must map between trivial groups for \( q > 0 \), the spectral sequence collapses immediately:
		$
		E_2^{p,q} \cong E_\infty^{p,q}.
		$ The associated graded algebra satisfies:
		$
		 H^k(M^\mathbf{A}) \cong \bigoplus_{p + q = k} E_\infty^{p,q} \cong H^k(M),
		$
		and since the filtration is trivial, we conclude:
		$
		H^*(M^\mathbf{A}) \cong H^*(M). \qedhere
		$
	\end{proof}

\subsection{Cohomology ilustrations:}

\begin{itemize}
\item Tangent Bundle of $S^2$. 
\text{Base}: $M = S^2$. 
\text{Weil Algebra}: $\mathbf{A} = \mathbb{R}[\epsilon]/(\epsilon^2)$ .\\
\text{Weil Bundle}: $M^\mathbf{A} = T S^2$ (tangent bundle). 
\text{Fiber}: $\mathcal{A}^{\dim M} = \mathbb{R}^2$ (contractible).\\
\text{Cohomology}:
$
H^k(T S^2) \cong 
\begin{cases} 
	\mathbb{R} & k = 0, 2 \\
	0 & \text{otherwise}.
\end{cases}
$\\
\text{Spectral sequence collapse}:
$
E_2^{p,q} = 
\begin{cases} 
	H^p(S^2) & q = 0 \\
	0 & q > 0 
\end{cases} \implies H^*(T S^2) \cong H^*(S^2).
$
\item Second-Order Jet Bundle of $T^2$. 
\text{Base}: $M = T^2$. 
\text{Weil Algebra}: $\mathbf{A} = \mathbb{R}[\epsilon_1, \epsilon_2]/(\epsilon_1^2, \epsilon_2^2, \epsilon_1\epsilon_2)$. \\
\text{Weil Bundle}: $M^\mathbf{A} = J^2(T^2)$ ($2-$jet bundle). 
\text{Fiber}: $\mathbf{A}^{\dim M} = \mathbb{R}^4$ (contractible). \\
\text{Cohomology}:
$
H^k(J^2(T^2)) \cong 
\begin{cases} 
	\mathbb{R} & k = 0 \\
	\mathbb{R}^2 & k = 1 \\
	\mathbb{R} & k = 2 \\
	0 & k > 2.
\end{cases}
$\\
\text{Spectral esquence collapse}:
$
E_2^{p,q} = 
\begin{cases} 
	H^p(T^2) & q = 0 \\
	0 & q > 0 
\end{cases} \implies H^*(J^2(T^2)) \cong H^*(T^2).
$
\item Trivial Weil Algebra over $\mathbb{RP}^3$. 
\text{Base}: $M = \mathbb{RP}^3$. 
\text{Weil Algebra}: $\mathbf{A} = \mathbb{R}$. \\
\text{Weil Bundle}: $M^\mathbf{A} \cong M \times \{\text{pt}\} \cong M$. 
\text{Fiber}: $\mathbf{A}^{\dim M} = \{\text{pt}\}$. \\
\text{Cohomology}:
$
H^k(M^\mathbf{A}) \cong 
\begin{cases} 
	\mathbb{R} & k = 0, 3 \\
	0 & \text{otherwise}.
\end{cases}
$\\
\item Product Weil Algebra over $\mathbb{CP}^1$.  
\text{Base}: $M = \mathbb{CP}^1$ .
\text{Weil Algebra}: $\mathbf{A} = \mathbb{R}[\epsilon]/(\epsilon^2) \otimes \mathbb{R}[\eta]/(\eta^2)$. \\
\text{Weil Bundle}: $M^\mathbf{A}$ (encoding two tangent directions). 
\text{Fiber}: $\mathbf{A}^{\dim M} = \mathbb{R}^4$ (contractible).  
\text{Cohomology}:
$
H^k(M^\mathbf{A}) \cong 
\begin{cases} 
	\mathbb{R} & k = 0 \\
	0 & k = 1 \\
	\mathbb{R} & k = 2 \\
	0 & k > 2.
\end{cases}
$\\
\text{Spectral sequence collapse}:
$
E_2^{p,q} = 
\begin{cases} 
	H^p(\mathbb{CP}^1) & q = 0 \\
	0 & q > 0 
\end{cases} \implies H^*(M^\mathbf{A}) \cong H^*(\mathbb{CP}^1).
$
\end{itemize}

\section*{Open problems}
\begin{enumerate}
	\item \text{Geodesic Completeness}: Does \((M^\mathbf{A}, \mathfrak{d}_w)\) admit a Hopf–Rinow theorem? Establishing geodesic completeness would guarantee the existence of minimizing geodesics between any two points, crucial for optimization and path planning on Weil bundles.
	\item \text{Ergodicity}: If \(\phi: M \to M\) is ergodic, is \(\phi^\mathbf{A}: M^\mathbf{A} \to M^\mathbf{A}\) also ergodic? Understanding the ergodicity of Weil liftings has implications for the long-term behavior of dynamical systems with infinitesimal structure.
	\item \text{Machine Learning}: Can \(M^\mathbf{A}\) serve as a smooth latent space for neural networks informed by partial differential equations (PDEs)?
		\item \text{	Sheaf Cohomology}: Can infinitesimal data in \(M^\mathbf{A}\)
	detect topological obstructions ?
\end{enumerate}

\begin{center}
\textbf{	Acknowledgments}
\end{center}
\begin{center}
We dedicate this work to the memory of Professor Okassa, whose commitment and dedication to mathematics, especially his foundational work on the theory of infinitely near points, inspired us.

\end{center}
This research was initiated during a workshop organized by AFRIMath-CRNS in Porto-Novo, Benin (2023).  We gratefully acknowledge their support.\\

\textbf{No funding was received for the accomplishment of this work.}\\
\textbf{The authors Claim No Conflict of Interest.}

\end{document}